\newtheorem{theorem}{Theorem}[section]
\newtheorem{corollary}[theorem]{Corollary}
\newtheorem{proposition}[theorem]{Proposition}
\newtheorem{lemma}[theorem]{Lemma}
\theoremstyle{definition}
\newtheorem{definition}[theorem]{Definition}
\newtheorem{example}[theorem]{Example}
\newtheorem{examples}[theorem]{Examples}
\newtheorem{question}[theorem]{Question}
\newtheorem{questions}[theorem]{Questions}
\newtheorem{remark}[theorem]{Remark}
\newtheorem*{question*}{Question}
\DeclareMathOperator{\Hom}{Hom}
\DeclareMathOperator{\Fid}{Fid}
\DeclareMathOperator{\Id}{Id}
\DeclareMathOperator{\gp}{gp}
\DeclareMathOperator{\Ass}{Ass}
\DeclareMathOperator{\Ann}{Ann}
\DeclareMathOperator{\Spec}{Spec}
\numberwithin{equation}{section}
\begin{document}

\title[On Zero-Divisors of Semimodules and Semialgebras]{On Zero-Divisors of Semimodules and Semialgebras}

\author[Peyman Nasehpour]{\bfseries Peyman Nasehpour}

\address{Peyman Nasehpour\\
Department of Engineering Science \\
Golpayegan University of Technology   \\
Golpayegan\\
Iran}
\email{nasehpour@gut.ac.ir, nasehpour@gmail.com}

\subjclass[2010]{16Y60, 13B25, 13F25, 06D75.}

\keywords{Zero-divisors of Semimodules, Semiring polynomials, Monoid semirings, Auslander semimodules, McCoy semialgebras}

\begin{abstract}
In Section \ref{sec:zdm} of the paper, we prove McCoy's property for the zero-divisors of polynomials in semirings. We also investigate zero-divisors of semimodules and prove that under suitable conditions, the monoid semimodule $M[G]$ has very few zero-divisors if and only if the $S$-semimodule $M$ does so. The concept of Auslander semimodules are introduced in this section as well. In Section \ref{sec:mccoysemialgebras}, we introduce Ohm-Rush and McCoy semialgebras and prove some interesting results for prime ideals of monoid semirings. In Section \ref{sec:zdmccoys}, we investigate the set of zero-divisors of McCoy semialgebras. We also introduce strong Krull primes for semirings and investigate their extension in semialgebras.
\end{abstract}

\maketitle

\section{Introduction}\label{sec:intro}

The concept of zero-divisors in ring theory was one of the main concepts that Fraenkel introduced in his paper \cite{Fraenkel1914} in 1914 \cite[p. 59]{Kleiner2007}. Vandiver introduced the concept of zero-divisors in semirings in his 1934 paper \cite{Vandiver1934}, where he introduced the algebraic structure of semirings itself as well \cite{Golan1999}. The main purpose of the current paper is to focus on zero-divisors in semimodules and semialgebras and continue our study of these elements in our 2016 paper \cite{Nasehpour2016}. Before proceeding to explain what we do in this paper, it is important to clarify, from the beginning, what we mean by a semiring in this paper.

In this paper, by a semiring, we understand an algebraic structure, consisting of a nonempty set $S$ with two operations of addition and multiplication such that $(S,+)$ is a commutative monoid with the identity element $0$, $(S,\cdot)$ is a commutative monoid with identity element $1 \not= 0$, multiplication distributes over addition, i.e. $a\cdot (b+c) = a \cdot b + a \cdot c$ for all $a,b,c \in S$ and finally the element $0$ is the absorbing element of the multiplication, i.e. $s \cdot 0=0$ for all $s\in S$. Semimodules over semirings are defined similar to the concept of modules over rings in module theory \cite[Chap. 14]{Golan1999}. For semirings and semimodules and their applications, refer to the books \cite{Golan1999, Golan2003, GondranMinoux2008, HebischWeinert1998}.

Let us recall that for a semiring $S$ and a nonzero $S$-semimodule $M$, an element $s\in S$ is called a zero-divisor on $M$, if there is a nonzero $m\in M$ such that $sm =0$. Section \ref{sec:zdm} of the paper is devoted to the zero-divisors of semimodules. A classical result in commutative algebra states that if $R$ is a commutative ring with a nonzero identity, $f$ is a zero-divisor on $R[X]$, then $f$ can be annihilated by a nonzero constant $b\in R$ \cite[Theorem 2]{McCoy1942}. In Theorem \ref{mccoysemigroupsemimodule}, we show the following:

Let $M$ be an arbitrary $S$-semimodule and $G$ a cancellative torsion-free commutative monoid. If $f\in S[G]$ is a zero-divisor on $M[G]$, then $f$ can be annihilated by a nonzero constant $b\in M$ (McCoy's Theorem for Semimodules).

This useful statement helps us to obtain some interesting results related to the set of zero-divisors $Z_S(M)$ of an $S$-semimodule $M$. In order to explain some of the results that we obtain in Section \ref{sec:zdm}, we need to recall a couple of concepts in semiring theory. A nonempty subset $I$ of a semiring $S$ is said to be an ideal of $S$, if $a+b \in I$ for all $a,b \in I$ and $sa \in I$ for all $s \in S$ and $a \in I$ \cite{Bourne1951}. An ideal $I$ of a semiring $S$ is called a proper ideal of the semiring $S$, if $I \neq S$. A proper ideal $\textbf{p}$ of a semiring $S$ is called a prime ideal of $S$, if $ab\in \textbf{p}$ implies either $a\in \textbf{p}$ or $b\in \textbf{p}$. Similar to commutative algebra, if $M$ is an $S$-semimodule, we define an ideal $\textbf{p}$ of a semiring $S$ an associated prime ideal of $M$, if $\textbf{p} = \Ann(m)$ for some $m\in M$. Note that $\Ann(m)$, for each $m\in M$, is the set of all elements $s\in S$ such that $s\cdot m = 0$. We denote the set of all associated prime ideals of $M$ by $\Ass_S(M)$ or simply $\Ass(M)$ if there is no fear of ambiguity. In Theorem \ref{veryfewzero-divisor4}, we prove that if $Z_S(M) = {\textbf{p}_1}\cup {\textbf{p}_2}\cup \cdots \cup {\textbf{p}_n},$ where ${\textbf{p}_i} \in \Ass_S(M)$ for all $1 \leq i \leq n$, then $$Z_{S[G]}(M[G]) = {\textbf{p}_1}[G]\cup {\textbf{p}_2}[G]\cup \cdots \cup {\textbf{p}_n}[G]$$ and ${\textbf{p}_i}[G] \in \Ass_{R[G]}(M[G])$ for all $1 \leq i \leq n$.

We define an $S$-semimodule $M$ to be primal if $Z_S(M)$ is an ideal of $S$ (see Definition \ref{PrimalsemimoduleDef}). Note that an $S$-semimodule $M$ has Property (A) if each finitely generated ideal $I \subseteq Z_S(M)$ has a nonzero annihilator in $M$ (check Definition \ref{semimodulepropertyA}). In Corollary \ref{PrimalResult}, we show that if $S$ is a weak Gaussian semiring and $G$ is a cancellative torsion-free commutative monoid, then the $S[G]$-semimodule $M[G]$ is primal if and only if the $S$-semimodule $M$ is primal and has Property (A). We recall that a semiring $S$ is weak Gaussian if and only if each of its prime ideals is subtractive \cite[Theorem 19]{Nasehpour2016}. An ideal $I$ of a semiring $S$ is subtractive if $a+b\in I$ and $a\in I$ imply that $b\in $ for all $a,b\in S$ \cite[p. 66]{Golan1999}.  In this section, we also introduce Auslander semimodules. We define an $S$-semimodule $M$ to be an Auslander semimodule, if $Z(S) \subseteq Z_S(M)$ (see Definition \ref{AuslanderSemimodule}). In Theorem \ref{AuslanderSemimoduleThm}, we show that if $M$ is an Auslander $S$-semimodule and has Property (A) and $G$ is a cancellative torsion-free commutative monoid, then $M[G]$ is an Auslander $S[G]$-semimodule.

The definition of Auslander semimodules, inspired by the definition of Auslander modules in \cite{NasehpourAuslander}, is related to Auslander's Zero-Divisor Theorem in commutative algebra, which says that if $R$ is a Noetherian local ring, $M$ is an $R$-module of finite type and finite projective dimension, and $r\in R$ is not a zero-divisor on $M$, then $r$ is not a zero-divisor on $R$ (cf. \cite[p. 8]{Hochster1975} and \cite[Remark 9.4.8]{BrunsHerzog1998}).

At the end of Section \ref{sec:zdm}, we define an $S$-semimodule $M$ to be torsion-free if $Z_S(M) \subseteq Z(S)$ (See Definition \ref{Torsion-free-semimodule}). After that, in Theorem \ref{Torsion-free-Thm1}, we prove that if a semiring $S$ has property (A) and $G$ is a cancellative torsion-free commutative monoid, then the $S[G]$-semimodule $M[G]$ is torsion-free if and only if the $S$-semimodule $M$ is torsion-free.

A semiring $B$ is called to be an $S$-semialgebra, if there is a semiring morphism $\lambda: S \rightarrow B$. For the definition of semiring morphisms, one can refer to \cite[Chap. 9]{Golan1999}.

In Section \ref{sec:mccoysemialgebras} of the paper, we introduce Ohm-Rush and McCoy semialgebras. We define an $S$-semialgebra $B$ to be an Ohm-Rush $S$-semialgebra, if $f\in c(f)B$ for each $f\in B$, where by $c(f)$, we mean the intersection of all ideals $I$ of $S$ such that $f\in IB$ (see Definition \ref{OhmRushSemialgebra}). Note that if $R$ is a commutative ring the term Ohm-Rush algebra has been used for an $R$-algebra that is a content $R$-module by Epstein and Shapiro in \cite{EpsteinShapiro2016}. For more on content modules and algebras, refer to \cite{OhmRush1972}.

Now let $B$ be an $S$-semialgebra. We define $B$ to be a McCoy $S$-semialgebra if $B$ is an Ohm-Rush $S$-semialgebra and $g\cdot f = 0$ for $g,f \in B$ with $g \neq 0$ implies $s\cdot c(f) = 0$ for a nonzero $s\in S$ (check Definition \ref{McCoySemiAlgebraDef}).

After giving these definitions and some examples for them, we generalize some theorems related to weak Gaussian semirings \cite[Definition 18]{Nasehpour2016} and weak content semialgebras \cite[Definition 36]{Nasehpour2016} (for example, see Theorems \ref{WDMcriteria6} and Theorem \ref{NorthcottContentSemialgebra}). These are useful for the main purpose of the last section of the paper. In fact Section \ref{sec:mccoysemialgebras}, beside its interesting results, can be considered as a preparatory section for the last section of the paper which discusses the zero-divisors of McCoy semialgebras.

In Section \ref{sec:zdmccoys} of the present paper, we prove a couple of theorems for the relationship between the set of zero-divisors of a semiring $S$ and a McCoy $S$-semialgebra $B$. For example, in Theorem \ref{veryfewzerodivisorsThm1}, we show that if $S$ is a weak Gaussian semiring, $B$ is a McCoy and weak content $S$-semialgebra with homogeneous $c$, and the corresponding homomorphism $\lambda$ of the $S$-semialgebra $B$ is injective, then $S$ has very few zero-divisors if and only if $B$ does so. Note that in Definition \ref{homogeneouscontentfunction}, we define the content function from an Ohm-Rush $S$-semialgebra $B$ to the set of ideals of $S$ to be homogeneous if $c(s \cdot f) = s \cdot c(f)$ for each $s\in S$ and $f\in B$.

We also show that if $S$ is a weak Gaussian semiring and $B$ is a McCoy and weak content $S$-semialgebra such that the corresponding homomorphism $\lambda$ is injective and $c : B \rightarrow \Fid(S)$ is homogeneous and onto, then $B$ has few zero-divisors if and only if $S$ has few zero-divisors and property (A) (see Theorem \ref{fewzerodivisorsThm1}). Note that by $\Fid(S)$, we mean the set of all finitely generated ideals of $S$.

We finalize the last section by giving the definition of strong Krull prime ideals for semirings. In Definition \ref{strongKrullprimesemiring}, we define a prime ideal $\textbf{p}$ of a semiring $S$ to be a strong Krull prime of $S$, if for any finitely generated ideal $I$ of $S$, there exists a $z\in S$ such that $I\subseteq \Ann(z) \subseteq \textbf{p}$, whenever $I\subseteq \textbf{p}$. After giving this definition, in Lemma \ref{sKprime1}, we prove that if $B$ is a weak content $S$-semialgebra such that the corresponding homomorphism $\lambda$ is injective and $c$ is homogeneous and $\textbf{p}$ is a strong Krull prime of $S$, then either $\textbf{p}B = B$ or $\textbf{p}B$ is a strong Krull prime of $B$. 

Finally, in Theorem \ref{sKprime2}, we show that if $B$ is a McCoy and weak content $S$-semialgebra such that the corresponding homomorphism $\lambda$ is injective, $c$ is homogeneous, and $Z(S)$ is a finite union of strong Krull primes of $S$, then $Z(B)$ is a finite union of strong Krull primes of $B$. We emphasize that one of the corollaries of these results (see Corollary \ref{strongKrullcontent}) is that if $R$ is a commutative ring with a nonzero identity and $B$ is a content $R$-algebra and $Z(R)$ is a finite union of strong Krull primes of $R$, then $Z(B)$ is a finite union of strong Krull primes of $B$. Now we pass to the next section to investigate zero-divisors of semimodules.

\section{Zero-Divisors of Semimodules}\label{sec:zdm}

Let us recall that for a semiring $S$ and a nonzero $S$-semimodule $M$, an element $s\in S$ is called a zero-divisor on $M$, if there is a nonzero $m\in M$ such that $sm =0$. The set of all zero-divisors on $M$ is denoted by $Z_S(M)$, or simply by $Z(M)$, whenever there is no fear of ambiguity.

From monoid theory, we know that a cancellative torsion-free commutative monoid can be embedded into a totally ordered Abelian group (cf. \cite[Corollary 15.7]{Gilmer1972}). We use this to prove Theorem \ref{mccoysemigroupsemimodule}, which is a generalization of a classical result for zero-divisors of polynomial rings showed by Neal Henry McCoy \cite[Theorem 2]{McCoy1942}.

Also, let us recall that any nonzero finitely generated torsion-free Abelian group is isomorphic to the sum of $n$ copies of $\mathbb Z$, i.e., $\oplus_{i=1}^n \mathbb Z$ \cite[Theorem 25.23]{Spindler1994}. This means that arguments on monoid semiring $S[G]$ (semimodule $M[G]$) - where $S$ ($M$) is a semiring (semimodule) and $G$ is a cancellative torsion-free commutative monoid - can be reduced to arguments on finite-variable Laurent polynomial semiring (semimodule) over $S$ ($M$), as for instance, we will see in the proof of Theorem \ref{WDMcriteria5}.

\begin{theorem}[McCoy's Theorem for Semimodules]

\label{mccoysemigroupsemimodule}

Let $M$ be an arbitrary $S$-semimodule and $G$ a cancellative torsion-free commutative monoid. If $f\in S[G]$ is a zero-divisor on $M[G]$, then it can be annihilated by a nonzero constant $b\in M$.

\begin{proof}
For the proof, we take $f\in S[G]$ to be of the form $f = a_0 X^{u_n} + \cdots + a_n X^{u_0}$ ($n \geq 0$, $a_i \in S$, $u_i > u_{i-1}$ and $a_0 \neq 0$) and let $g\in M[G]-\{0\}$ with $f \cdot g =0$. If $g$ is a monomial, then the unique nonzero coefficient of $g$ annihilates $f$. So, we can assume that $g = b_0 X^{t_m} + \cdots + b_m X^{t_0}$ ($m \geq 1$, $b_i \in M$, $t_i > t_{i-1}$ and $b_0 \neq 0$). Now consider the elements $a_0 \cdot g, \ldots, a_n \cdot g$ of $M[G]$. If all of these elements are zero, then $b_0$ annihilates any coefficient of $f$ and the theorem is proved. Otherwise, there is an $r\in \{0,1, \ldots, n\}$ such that $ a_0 \cdot g = \cdots = a_{r-1} \cdot g = 0$, while $a_r \cdot g \neq 0$. This causes $(a_r X^{u_{n-r}} + \cdots + a_n X^{u_0}) \cdot g = 0$, since $f \cdot g = 0$. Clearly, this implies that $ a_r \cdot b_0 = 0$. Therefore, $h = a_r \cdot g \neq 0$ has less monomials than $g$ and $f \cdot h = f \cdot a_r \cdot g = 0$. Therefore, by mathematical induction on the numbers of the monomials of $g$, there is a nonzero constant $b\in M$ such that $f \cdot b = 0$ and the proof is complete.
\end{proof}

\end{theorem}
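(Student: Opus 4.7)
The plan is to adapt McCoy's classical polynomial argument to the monoid-semiring setting, using a total order on $G$ in place of polynomial degree. First I would invoke the cited embedding theorem that a cancellative torsion-free commutative monoid embeds into a totally ordered Abelian group; equivalently, by passing to the finitely generated subgroup generated by the supports of $f$ and the candidate annihilator $g$, one reduces to $\mathbb{Z}^n$ equipped with a lexicographic order. The point is that every nonzero element of $S[G]$ and $M[G]$ then has a well-defined leading monomial, and by cancellativity of $G$, the leading monomial of a product is the product of the leading monomials.

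With such an order fixed, write $f = a_0 X^{u_n} + \cdots + a_n X^{u_0}$ with $u_n > \cdots > u_0$ and $a_0 \neq 0$, and let $g = b_0 X^{t_m} + \cdots + b_m X^{t_0}$ be a nonzero element of $M[G]$ with $fg = 0$, $t_m > \cdots > t_0$, and $b_0 \neq 0$. I would argue by induction on the number $m+1$ of monomials of $g$. The base case $m = 0$ is immediate: the sole coefficient $b_0$ must kill every $a_i$, so $b_0 \cdot f = 0$ and we may take $b = b_0$.

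For the inductive step, consider the products $a_0 g, a_1 g, \ldots, a_n g$ in $M[G]$. If every one of them vanishes, then each $a_i$ kills $b_0$ and we are done with $b = b_0$. Otherwise pick the smallest $r$ with $a_r g \neq 0$; combining $fg = 0$ with $a_0 g = \cdots = a_{r-1} g = 0$ yields $(a_r X^{u_{n-r}} + \cdots + a_n X^{u_0}) \cdot g = 0$, and extracting the coefficient of the top monomial $X^{u_{n-r}+t_m}$ forces $a_r b_0 = 0$. Hence $h := a_r \cdot g$ is a nonzero element of $M[G]$ with strictly fewer monomials than $g$ (its previous leading coefficient is now zero), and $f h = a_r \cdot (fg) = 0$, so the inductive hypothesis applied to $h$ produces the desired nonzero constant $b \in M$ annihilating $f$.

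The main obstacle, more conceptual than technical, is setting up the order-theoretic framework so that leading-term reasoning is legitimate over an arbitrary cancellative torsion-free commutative monoid; this is handled entirely by the embedding theorem. The rest is a transcription of McCoy's induction, with one mild point to verify: the argument takes place on the semimodule side, so the step $a_r b_0 = 0$ comes from the action of $S$ on $M$ rather than from multiplication in a ring, and one needs the total order on $G$ together with cancellativity to guarantee that no other monomial of the truncated product can contribute to the top term and absorb the coefficient $a_r b_0$.
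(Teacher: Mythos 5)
Your proposal is correct and follows essentially the same route as the paper's own proof: embed $G$ in a totally ordered Abelian group, order the supports of $f$ and $g$, induct on the number of monomials of $g$, and use the vanishing of the leading coefficient $a_r b_0$ of the truncated product to shorten $g$. Your added remarks on why cancellativity and the total order legitimize the leading-term extraction make explicit a step the paper leaves implicit, but the argument is the same.
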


\begin{corollary}[McCoy's Theorem for Semirings]

\label{mccoysemigroupsemiring}

Let $S$ be a semiring and $G$ a cancellative torsion-free commutative monoid. If $f\in S[G]$ is a zero-divisor on $S[G]$, then it can be annihilated by a nonzero constant $s\in S$.

\end{corollary}

Let us recall that a nonempty subset $I$ of a semiring $S$ is said to be an ideal of $S$, if $a+b \in I$ for all $a,b \in I$ and $sa \in I$ for all $s \in S$ and $a \in I$ \cite{Bourne1951}. We denote the set of all ideals of $S$ by $\Id(S)$. An ideal $I$ of a semiring $S$ is called a proper ideal of the semiring $S$ if $I \neq S$. An ideal $I$ of a semiring $S$ is said to be subtractive, if $a+b \in I$ and $a \in I$ imply that $b \in I$ for all $a,b \in S$. We say that a semiring $S$ is subtractive if each ideal of the semiring $S$ is subtractive. Finally, we note that a proper ideal $\textbf{p}$ of a semiring $S$ is called a prime ideal of $S$, if $ab\in \textbf{p}$ implies either $a\in \textbf{p}$ or $b\in \textbf{p}$.

Now we proceed to prove a theorem for prime ideals of monoid semirings that is a generalization of a theorem for prime ideals of polynomial semirings due to Susan LaGrassa. In fact, LaGrassa in Theorem 2.6 of her dissertation \cite{LaGrassa1995} shows that if $\textbf{p}$ is an ideal of a semiring $S$ and $X$ is an indeterminate over $S$, then $\textbf{p}[X]$ is a prime ideal of $S[X]$ if and only if $\textbf{p}$ is a subtractive prime ideal of $S$.

\begin{theorem}

\label{WDMcriteria5}

Let $S$ be a semiring and $G$ a cancellative torsion-free commutative monoid. Then $\textbf{p}[G]$ is a prime ideal of $S[G]$ if and only if $\textbf{p}$ is a subtractive and prime ideal of $S$.

\begin{proof}

$(\Rightarrow)$: Suppose that $\textbf{p}[G]$ is a prime ideal of $S[G]$ and $a,b\in S$ such that $ab\in \textbf{p}$. Then either $a\in \textbf{p}[G]$ or $b\in \textbf{p}[G]$. Since $\textbf{p}[G] \cap S = \textbf{p}$, we have already proved that $\textbf{p}$ is a prime ideal of $S$. Now suppose that $a,b\in S$ such that $a+b,a \in \textbf{p}$ and take a nonzero $v \in G$. Set $f=a+bX^v$ and $g=b+(a+b)X^v$. Clearly, $fg= ab+(a^2+ab+b^2)X^v+(ab+b^2)X^{2v}$ and so, $fg\in \textbf{p}[G]$. On the other hand, since $\textbf{p}[G]$ is prime, either $f\in \textbf{p}[G]$ or $g\in \textbf{p}[G]$ and in each case, $b\in \textbf{p}$. So, we have showed that $\textbf{p}$ is subtractive.

$(\Leftarrow)$: Suppose that $\textbf{p}$ is a subtractive prime ideal of $S$ and $fg \in \textbf{p}[G]$ for some $f,g \in S[G]$. Imagine $f\in S[G]$ is of the form $f = a_0 X^{u_n} + \cdots + a_n X^{u_0}$ ($n \geq 0$, $a_i \in S$, and $u_i > u_{i-1}$) and $g\in S[G]$ of the form $g = b_0 X^{t_m} + \cdots + b_m X^{t_0}$ ($m \geq 0$, $b_i \in S$ and $t_i > t_{i-1}$). Since $G$ is a cancellative torsion-free commutative monoid, it can be embedded into a torsion-free Abelian group $\gp(G)$ \cite[p. 50]{BrunsGubeladze2009}. Therefore, the subgroup $G_0$ generated by $u_n, \ldots, u_0, t_m, \ldots, t_0$ is isomorphic to a finite copies of $\mathbb Z$ and this means that $fg\in \textbf{p}[G_0]$, where $S[G_0]$ is isomorphic to a finite-variable Laurent polynomial semiring. Now note that by Theorem 39 in \cite{Nasehpour2016}, $\textbf{p}[G_0]$ is a prime ideal and therefore, either $f\in \textbf{p}[G_0]$ or $g\in \textbf{p}[G_0]$ and this means that either $f\in \textbf{p}[G]$ or $g\in \textbf{p}[G]$ and the proof is complete.
\end{proof}

\end{theorem}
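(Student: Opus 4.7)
The plan is to prove the two implications separately, treating the forward direction as the substantive one.

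For $(\Rightarrow)$, primeness of $\textbf{p}$ falls out cheaply. Identifying $S$ with the constant polynomials inside $S[G]$, one has $\textbf{p}[G]\cap S=\textbf{p}$; so if $a,b\in S$ satisfy $ab\in\textbf{p}$, then $ab\in\textbf{p}[G]$, and primeness of $\textbf{p}[G]$ forces one of $a,b$ into $\textbf{p}[G]\cap S=\textbf{p}$. For subtractiveness, assume $a, a+b\in\textbf{p}$; the task is to manufacture $f,g\in S[G]$ whose product already lies in $\textbf{p}[G]$ while neither alternative $f\in\textbf{p}[G]$ nor $g\in\textbf{p}[G]$ is compatible with $b\notin\textbf{p}$. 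Fixing any nonzero $v\in G$, I would try the symmetric choice $f=a+bX^v$ and $g=b+(a+b)X^v$. Expanding $fg$, each coefficient should be writable as an $S$-linear combination of $a$ and $a+b$, both of which lie in $\textbf{p}$, with no subtraction step — this is what matters in the semiring setting. Once $fg\in\textbf{p}[G]$ is in hand, the two cases $f\in\textbf{p}[G]$ and $g\in\textbf{p}[G]$ each directly pin down $b\in\textbf{p}$ by looking at an explicit coefficient.

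For $(\Leftarrow)$, the strategy is reduction from the arbitrary monoid $G$ to the finitely generated case. Given $fg\in\textbf{p}[G]$ with $f,g\in S[G]$, only finitely many exponents from $G$ actually occur in $f$ and $g$. I embed $G$ into $\gp(G)$, which is torsion-free abelian since $G$ is cancellative and torsion-free, and let $G_0$ be the subgroup generated by those finitely many exponents. Then $G_0$ is a finitely generated torsion-free abelian group, hence $G_0\cong\mathbb{Z}^n$, so $S[G_0]$ can be identified with a finite-variable Laurent polynomial semiring over $S$. The Laurent-polynomial analogue of LaGrassa's theorem, already available as \cite[Theorem 39]{Nasehpour2016}, then certifies $\textbf{p}[G_0]$ is prime in $S[G_0]$; so either $f\in\textbf{p}[G_0]$ or $g\in\textbf{p}[G_0]$, and each of these sits inside $\textbf{p}[G]$.

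I expect the subtractiveness half of $(\Rightarrow)$ to be the main obstacle. The difficulty is that one cannot simply peel $b^{2}$ out of a coefficient like $ab+b^{2}$ using $ab\in\textbf{p}$, since subtractiveness is precisely the hypothesis we are trying to establish; any rewriting has to stay additive. The entire point of the above choice of $f$ and $g$ is that each coefficient of $fg$ admits a presentation purely in terms of multiples of $a$ and of $a+b$, bypassing the need for any cancellation. The remaining pieces — primeness of $\textbf{p}$ and the reduction to $\mathbb{Z}^{n}$ — are essentially automatic once the right auxiliary result is in place.
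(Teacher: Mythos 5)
Your proposal is correct and follows essentially the same route as the paper's own proof: the identity $\textbf{p}[G]\cap S=\textbf{p}$ for primeness, the identical pair $f=a+bX^{v}$, $g=b+(a+b)X^{v}$ (whose product's coefficients are indeed additive combinations $ab=b\cdot a$, $a\cdot a+b(a+b)$, $b(a+b)$ of multiples of $a$ and $a+b$) for subtractiveness, and the reduction to a finitely generated subgroup $G_{0}\cong\mathbb{Z}^{n}$ of $\gp(G)$ together with the Laurent-polynomial case from \cite[Theorem 39]{Nasehpour2016} for the converse. No gaps.
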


Let us recall that a semiring $E$ is called entire if $ab=0$ implies either $a=0$ or $b=0$ for all $a,b\in E$ \cite[p. 4]{Golan1999}.

\begin{corollary}

\label{entiremccoy}
Let $S$ be a semiring and $G$ a cancellative torsion-free commutative monoid. Then $S$ is an entire semiring if and only if $S[G]$ is an entire semiring. In particular, if $k$ is a semifield and $G$ a cancellative torsion-free commutative monoid, then $k[G]$ is an entire semiring. 
\end{corollary}

\begin{remark}
	
\begin{enumerate}
		
    \item By considering Corollary \ref{entiremccoy}, one may ask if it is possible for $S[G]$ to be entire if $S$ is a semiring, while $G$ is not necessarily a cancellative torsion-free commutative monoid. In the following, we give an affirmative answer to this question:
	
	Let us recall that a semiring $S$ is called zerosumfree if $s_1 +s_2 = 0$ implies that $s_1 = s_2=0$, for all $s_1$ and $s_2$ in $S$. Also, a semiring $S$ is called an information algebra if it is both zerosumfree and entire \cite[p. 4]{Golan1999}. Now if $S$ is an information algebra and $G$ is a commutative monid with at least two elements, then the monoid semiring $S[G]$ is entire and here is the proof: \begin{proof} It is clear that if $S$ is an information algebra and $a$ and $b$ are nonzero elements of $S$, then $ab+s$ is also nonzero, for any $s\in S$. Now if $f$ and $g$ are nonzero elements of $S[G]$, then $f$ and $g$, respectively, have monomials of the form $aX^g$ and $bX^h$, where $a$ and $b$ are both nonzero. Clearly, $(ab+s) X^{g+h}$ is a nonzero monimial of $fg$. Therefore, $fg$ is nonzero and the proof is complete.\end{proof} 
	
	\item If $k$ is a field and $G$ is a torsion-free and non-Abelian group, Kaplansky's zero-divisor conjecture states that the group ring $k[G]$ has no non-trivial zero-divisors (cf. \cite[Problem 6]{Kaplansky1970MAA} and \cite[Chap. 13]{Passman1977}). This longstanding algebra conjecture is related to the linear independence of time-frequency shifts (also known as HRT) conjecture in wavelet theory \cite{HeilSpeegle2015}. 
	
\end{enumerate}
\end{remark}

\begin{corollary}

\label{assprimemonoidsemiring}

Let $M$ be an $S$-semimodule and $G$ a cancellative torsion-free commutative monoid. If $\textbf{p} = \Ann(m)$ is a prime ideal of $S$ for some $m\in M$, then $\textbf{p}[G] = \Ann(m)$ is a prime ideal of $S[G]$.

\begin{proof}
Since $\textbf{p} = \Ann(m)$ is subtractive, by Theorem \ref{WDMcriteria5}, $\textbf{p}[G]$ is a prime ideal of $S[G]$. The proof of $\textbf{p}[G] = \Ann(m)$ is straightforward.
\end{proof}

\end{corollary}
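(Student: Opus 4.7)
The plan is to apply Theorem \ref{WDMcriteria5} to obtain primality of $\textbf{p}[G]$ in $S[G]$, and then verify the equality of ideals by a direct coefficient comparison. Theorem \ref{WDMcriteria5} requires $\textbf{p}$ to be both prime and subtractive; primality is given by hypothesis, so the first step is to check that any annihilator ideal $\Ann(m)$ is automatically subtractive. Once this is secured, the primality of $\textbf{p}[G]$ follows immediately from Theorem \ref{WDMcriteria5}.

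For the subtractivity claim, I would argue as follows. Suppose $a,b \in S$ with $a+b \in \Ann(m)$ and $a \in \Ann(m)$. Distributivity in the $S$-semimodule $M$ gives $(a+b)m = am + bm$, and substituting $am = 0$ yields $0 = (a+b)m = 0 + bm = bm$, so $b \in \Ann(m)$. This argument uses only the module axioms and the fact that $0$ is the additive identity of $M$; no actual subtraction is invoked, which is why the conclusion remains valid in the semimodule setting.

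It remains to verify that $\textbf{p}[G] = \Ann_{S[G]}(m)$, where $m$ is viewed as the constant element $m\cdot X^{0}$ of $M[G]$. For any $f = \sum_{i} a_i X^{u_i} \in S[G]$, one has $f \cdot m = \sum_{i} (a_i m) X^{u_i}$ inside $M[G]$. Since $M[G]$ is freely generated as an $M$-semimodule by the monomials $\{X^{u} : u \in G\}$, this element vanishes if and only if $a_i m = 0$ for every $i$, i.e., if and only if every coefficient of $f$ lies in $\Ann(m) = \textbf{p}$, i.e., if and only if $f \in \textbf{p}[G]$. This gives the desired equality.

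The only potential obstacle, if one can call it that, is the subtractivity step, but as noted above it is essentially a one-line computation that exploits the absorbing role of $0$; the coefficient comparison that delivers $\textbf{p}[G] = \Ann_{S[G]}(m)$ is entirely routine thanks to the free-module structure of $M[G]$ over $M$.
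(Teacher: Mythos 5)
Your proposal is correct and follows the same route as the paper: check that $\Ann(m)$ is subtractive, invoke Theorem \ref{WDMcriteria5} for primality of $\textbf{p}[G]$, and verify $\textbf{p}[G]=\Ann_{S[G]}(m)$ by comparing coefficients. You have merely filled in the two details the paper leaves as immediate, and both of your verifications are sound.
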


In \cite{Davis1964}, it has been defined that a ring $R$ has few zero-divisors, if $Z(R)$ is a finite union of prime ideals. Rings having very few zero-divisors were investigated in \cite{Nasehpour2010}. Modules having few and very few zero-divisors were defined and investigated in \cite{Nasehpour2008, NasehpourPhD, Nasehpour2011, NasehpourPayrovi2010}. We give the following definition and prove some interesting results for zero-divisors of monoid semimodules. Now let $M$ be an $S$-semimodule. Similar to commutative algebra, we define an ideal $\textbf{p}$ of a semiring $S$ to be an associated prime ideal of $M$ if $\textbf{p}$ is a prime ideal of $S$ and $\textbf{p} = \Ann(m)$ for some $m\in M$. The set of all associated prime ideals of $M$ is denoted by $\Ass_S(M)$ or simply $\Ass(M)$.

\begin{definition}

\label{semimodulevfzd}

We define an $S$-semimodule $M$ to have very few zero-divisors, if $Z_S(M)$ is a finite union of prime ideals in $\Ass_S(M)$.
\end{definition}

In order to give some suitable examples for semimodules having very few zero-divisors, we prove the following theorem that is a semiring version of a theorem in commutative algebra due to I. N. Herstein (1923-1988):

\begin{theorem}

\label{primeherstein}

Let $M$ be a nonzero $S$-semimodule. If $\textbf{p}$ is a maximal element of all ideals of the form $\Ann(m)$ of $S$, where $m$ is a nonzero element of $M$, then $\textbf{p}$ is prime.

\begin{proof}
Take $\textbf{p} = \Ann(m)$ and let $ab\in \Ann(m)$. Let $a\notin \textbf{p}$. So, $am \neq 0$ and $\Ann(am) \supseteq \textbf{p}$. Since $\textbf{p}$ is maximal among ideals of $S$ in the form of $\Ann(x)$ with $x\in M-\{0\}$, we have that $\Ann(am) = \Ann(m)$. On the other hand, $abm = 0$. So, $b\in \Ann(m)$ and this means that $\textbf{p}$ is prime and the proof is complete.
\end{proof}

\end{theorem}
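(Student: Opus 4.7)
The plan is to mirror the standard module-theoretic argument, which transfers to semimodules without modification because $\Ann(x)$ is always an additively-closed and $S$-absorbing subset of $S$ (no subtraction is needed to see this). Write $\textbf{p} = \Ann(m)$ for the chosen nonzero $m \in M$, and first verify that $\textbf{p}$ is proper: since $M$ is unital we have $1\cdot m = m \neq 0$, so $1 \notin \textbf{p}$. This rules out the degenerate case in which the primeness condition would be vacuous or ill-formulated.

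Next I would take $a,b \in S$ with $ab \in \textbf{p}$ and assume $a \notin \textbf{p}$, aiming to deduce $b \in \textbf{p}$. Because $a \notin \Ann(m)$ we have $am \neq 0$, so $\Ann(am)$ is itself a member of the family of annihilators over which $\textbf{p}$ is maximal. The inclusion $\Ann(m) \subseteq \Ann(am)$ is immediate from commutativity of $S$: if $sm = 0$, then $s(am) = a(sm) = 0$. By maximality this inclusion must be equality, so $\Ann(am) = \textbf{p}$. Finally, $ab \in \Ann(m)$ gives $b(am) = abm = 0$, so $b \in \Ann(am) = \textbf{p}$, as required.

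There is no substantive obstacle here; the proof is essentially formal, relying only on the commutativity of $S$, the unitality of $M$, and the additive/absorption closure of annihilators. The single point that merits care, and the one step where the semimodule hypotheses actually intervene, is the use of unitality to guarantee $\textbf{p} \neq S$; without it, there would be no reason to exclude the case $\Ann(m) = S$ from the maximal family, and the conclusion would fail.
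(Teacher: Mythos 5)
Your proof is correct and follows essentially the same route as the paper's: from $a\notin\textbf{p}$ conclude $am\neq 0$, use maximality to force $\Ann(am)=\Ann(m)$, and then place $b$ in $\Ann(am)$. The only addition is your explicit check that $\textbf{p}$ is proper via unitality, which the paper leaves implicit but which is a reasonable point to make.
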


Similar to commutative algebra, an $S$-semimodule $M$ is called Noetherian if any $S$-subsemimodule of $M$ is finitely generated. A semiring $S$ is Noetherian if it is Noetherian as an $S$-semimodule \cite[p. 69]{Golan1999}.

\begin{corollary}

\label{zero-divisorunionofprimes2}

If $S$ is a Noetherian semiring and $M$ is an $S$-semimodule, then $Z(M)$ is a union of subtractive prime ideals of $S$.

\begin{proof}

Note that if $M$ is an $S$-semimodule, then $Z(M) = \bigcup_{m\in M-\{0\}} \Ann(m)$. Set $\mathcal C$ to be the collection of all maximal elements of $\{\Ann(m) : m\in M-\{0\} \}$. Since $S$ is Noetherian, $\mathcal C$ is nonempty. Therefore, $Z(M) = \bigcup_{I\in \mathcal C} I$. By Theorem \ref{primeherstein}, any element of $\mathcal C$ is prime of the form $\Ann(m)$ for some $m\neq 0$, which is a subtractive ideal. Hence, $Z(M)$ is a union of subtractive prime ideals of $S$ and the proof is complete.
\end{proof}

\end{corollary}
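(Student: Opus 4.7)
The plan is to follow the standard argument from commutative algebra, adapted to the semiring setting, and to piggyback on Theorem \ref{primeherstein}. First I would observe that by the very definition of $Z(M)$, an element $s\in S$ lies in $Z(M)$ iff $sm=0$ for some nonzero $m\in M$, which yields the decomposition
\[
Z(M) \;=\; \bigcup_{m\in M\setminus\{0\}} \Ann(m).
\]

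Next I would use the Noetherian hypothesis to refine this union. Since $S$ is Noetherian as an $S$-semimodule, every ideal of $S$ (being a subsemimodule of $_SS$) is finitely generated, so the ACC on ideals of $S$ holds. Consequently the family $\mathcal{S}=\{\Ann(x):x\in M\setminus\{0\}\}$ has maximal elements, and every member of $\mathcal{S}$ is contained in such a maximal one. Letting $\mathcal{C}$ denote the set of maximal elements of $\mathcal{S}$, this gives
\[
Z(M) \;=\; \bigcup_{I\in\mathcal{C}} I,
\]
because each $\Ann(m)$ appearing in the previous decomposition is absorbed into some $I\in\mathcal{C}$, while conversely every $I\in\mathcal{C}$ is itself of the form $\Ann(m)$ for some nonzero $m$.

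Now I would invoke Theorem \ref{primeherstein}: each $I\in\mathcal{C}$, being a maximal element among ideals of the form $\Ann(m)$ with $m$ a nonzero element of $M$, is prime. Finally, subtractivity of the ideals in $\mathcal{C}$ is a routine verification that works for every annihilator: if $a,\,a+b\in\Ann(m)$, then from $am=0$ and $(a+b)m=am+bm=0$ one reads off $bm=0$, so $b\in\Ann(m)$. Combining these facts yields that $Z(M)$ is a union of subtractive prime ideals of $S$, completing the proof.

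I do not expect a real obstacle here; the only delicate point is making sure the author's definition of a Noetherian semiring (every $S$-subsemimodule of $S$ is finitely generated) genuinely delivers the ACC on ideals needed to produce maximal elements of $\mathcal{S}$, which is immediate since ideals of $S$ coincide with subsemimodules of $_SS$.
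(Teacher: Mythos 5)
Your proposal is correct and follows essentially the same route as the paper: decompose $Z(M)$ as $\bigcup_{m\neq 0}\Ann(m)$, use the Noetherian hypothesis to extract maximal elements of this family, apply Theorem \ref{primeherstein} to see these are prime, and note that annihilators are subtractive. The only difference is that you spell out two points the paper leaves implicit (that every $\Ann(m)$ is absorbed into a maximal one via the ACC, and the one-line check that $\Ann(m)$ is subtractive), which is a welcome but inessential elaboration.
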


\begin{corollary}
	
	\label{Assisnonempty}
	
If $S$ is Noetherian and $M$ is an $S$-semimodule, then $\Ass(M)\neq \emptyset$ and $Z(M)$ is the union of all prime ideals in $\Ass(M)$.
\end{corollary}

Now we prove the following interesting theorem:

\begin{theorem}

\label{veryfewzero-divisor3}

Let $S$ be a Noetherian semiring and $M$ a Noetherian $S$-semimodule. Then $0 < \mid \Ass_S(M) \mid < \infty$.

\begin{proof}

Let $S$ be Noetherian. So by Corollary 	\ref{Assisnonempty}, $\Ass(M)\neq \emptyset$. Assume that $\{\textbf{p}_i = \Ann(m_i)\}_i$ is the family of maximal primes of $Z(M)$. Take $N$ to be the $S$-subsemimodule of $M$ generated by the elements $\{m_i\}_i$, where $\textbf{p}_i = \Ann(m_i)$. Since $M$ is Noetherian, $N$ is generated by a finite number of elements in $\{m_i\}_i$, say by $m_1, m_2, \ldots, m_k$. If any further $m_i$'s exists, it can be written as a linear combination of $m_1, m_2, \ldots, m_k$, i.e., for example we have $m_{k+1} = t_1 m_1 + t_1 m_1 + t_2 m_2 + \cdots + t_k m_k$,  where $t_i \in S$. This implies that $\textbf{p}_1 \cap \textbf{p}_2 \cap \cdots \cap \textbf{p}_k \subseteq \textbf{p}_{k+1}$ and therefore, $\textbf{p}_l \subseteq \textbf{p}_{k+1}$ for some $1 \leq l \leq k$, contradicting the maximality of $\textbf{p}_l$. Hence, there are no further $m_i$'s ($\textbf{p}_i$'s) and the proof is complete.
\end{proof}

\begin{example}
A family of examples for Theorem \ref{veryfewzero-divisor3}: Let $n\geq 2$ be a non-prime natural number. Clearly, the Noetherian semiring $\Id(\mathbb Z_n)$ possesses non-trivial zero-divisors. Now if we take $M = S^n$, then $M$ is also Noetherian. Therefore, by Theorem \ref{veryfewzero-divisor3}, $M$ has very few zero-divisors.
\end{example}

\end{theorem}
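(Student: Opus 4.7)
The plan is to exploit the Noetherian condition on $M$ to force the family of associated primes to be finitely generated as witnesses of a subsemimodule of $M$, and then derive a contradiction from the maximality properties guaranteed by Theorem \ref{primeherstein}.

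First, I would invoke the Herstein-style Theorem \ref{primeherstein} together with Corollary \ref{zero-divisorunionofprimes2}: since $S$ is Noetherian, for every nonzero $m \in M$ the annihilator $\Ann(m)$ is contained in some maximal annihilator, and every such maximal annihilator is prime and belongs to $\Ass_S(M)$. Thus the associated primes can be enumerated as $\{\textbf{p}_i = \Ann(m_i)\}_{i \in I}$ with $0 \neq m_i \in M$. My goal is to prove $I$ is finite.

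Second, I would consider the $S$-subsemimodule $N$ of $M$ generated by the chosen witnesses $\{m_i\}_{i\in I}$. Since $M$ is Noetherian, $N$ is finitely generated, so after reindexing there exist finitely many indices $1, 2, \ldots, k$ such that $N$ is generated by $m_1, \ldots, m_k$. Suppose for contradiction that some further witness $m_{k+1}$ existed; then I could write $m_{k+1} = t_1 m_1 + \cdots + t_k m_k$ with $t_i \in S$. Any $s \in \textbf{p}_1 \cap \cdots \cap \textbf{p}_k$ satisfies $s m_i = 0$ for $i = 1, \ldots, k$, hence $s m_{k+1} = 0$, so $s \in \Ann(m_{k+1}) = \textbf{p}_{k+1}$. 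Therefore $\textbf{p}_1 \cap \cdots \cap \textbf{p}_k \subseteq \textbf{p}_{k+1}$.

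Third, because $\textbf{p}_{k+1}$ is prime and contains a finite intersection of ideals, the standard semiring argument (choose, for each $l \neq $ index, an element in $\textbf{p}_l \setminus \textbf{p}_{k+1}$ and multiply) yields $\textbf{p}_l \subseteq \textbf{p}_{k+1}$ for some $1 \leq l \leq k$. This contradicts the maximality of $\textbf{p}_l$ among annihilators of nonzero elements of $M$, so no such $m_{k+1}$ exists and $|\Ass_S(M)| \leq k < \infty$. The main obstacle is justifying the step ``prime contains a finite intersection of ideals implies it contains one of them'' in the semiring context, but this is purely a consequence of the definition of a prime ideal recalled just before Theorem \ref{WDMcriteria5} and does not require subtractivity.
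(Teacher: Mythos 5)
Your proposal follows the paper's proof almost line for line: the same subsemimodule $N$ generated by the witnesses $m_i$, the same appeal to Noetherianity of $M$ to cut the generating set down to $m_1,\ldots,m_k$, the same containment $\textbf{p}_1\cap\cdots\cap\textbf{p}_k\subseteq\textbf{p}_{k+1}$, and the same contradiction with maximality. You also correctly supply two details the paper leaves implicit: that any $s\in\bigcap_{i\le k}\textbf{p}_i$ kills $m_{k+1}$, and that a prime ideal containing a finite intersection of ideals must contain one of them (the product argument indeed needs no subtractivity).

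There is, however, an internal inconsistency in your setup that the paper avoids. You enumerate $\{\textbf{p}_i=\Ann(m_i)\}_{i\in I}$ as \emph{all} of $\Ass_S(M)$, since your stated goal is to bound $|I|=|\Ass_S(M)|$. But then your final step, that $\textbf{p}_l\subseteq\textbf{p}_{k+1}$ ``contradicts the maximality of $\textbf{p}_l$ among annihilators of nonzero elements of $M$,'' is unavailable: an arbitrary associated prime $\textbf{p}_l$ need not be maximal among annihilators (associated primes properly contained in larger annihilators do occur), so the conclusion $\textbf{p}_l\subsetneq\textbf{p}_{k+1}$ is no contradiction at all, and the argument does not close as written. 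The paper sidesteps this by restricting the family from the outset to the \emph{maximal} primes of $Z(M)$, which are prime and associated by Theorem \ref{primeherstein}; with that restriction the contradiction is genuine and one obtains finiteness of the set of maximal annihilators. If you adopt the same restriction you recover the paper's argument exactly (including its own tacit passage from ``finitely many maximal annihilators'' to ``$\Ass_S(M)$ is finite''); without it, you need a separate reason why every associated prime is maximal among annihilators, and there is none in general.
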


The following statement is a semiring version of Remark 2.3.3 in \cite{NasehpourPhD}:

\begin{proposition}

Let $S$ be a semiring and consider the following three conditions on $S$:
	
	\begin{enumerate}
		\item The semiring $S$ is Noetherian.
		\item The semiring $S$ has very few zero-divisors.
		\item The semiring $S$ has few zero-divisors.
	\end{enumerate}
	
	Then $(1) \Rightarrow (2) \Rightarrow (3)$ and none of the implications are reversible.
\end{proposition}

\begin{proof}
For a proof of the implication $(1) \Rightarrow(2) $, use Corollary \ref{Assisnonempty} and Theorem \ref{veryfewzero-divisor3}. It is obvious that (2) implies (3).

$(2) \nRightarrow (1)$: Let $x_1, x_2, x_3, \ldots, x_n, \ldots$ be indeterminates over the semifield $F$ and put $E= F[x_1, x_2, x_3, \ldots, x_n, \ldots]$. Clearly, $E$ is an entire semiring and so, it has very few zero-divisors, while it is not Noetherian.
	
$(3) \nRightarrow (2)$:	Let $\textbf{k}$ be a field and imagine $D=\textbf{k}[x_1, x_2, x_3, \ldots, x_n, \ldots]$. Also set $\mathfrak{m} =(x_1, x_2, x_3, \ldots, x_n, \ldots)$ and $\mathfrak{a}=(x_1^2, x_2^2, x_3^2, \ldots, x_n^2, \ldots)$, where $x_i$s are indeterminates over $k$. Now put $R=D/\mathfrak{a}$. It is easy to check that $R$ is a local ring with the only prime ideal $\mathfrak{m}/\mathfrak{a}$ and $Z(R)=\mathfrak{m}/\mathfrak{a}$, while $\mathfrak{m}/\mathfrak{a}\notin \Ass_R(R)$. In fact, $\Ass_R(R)=\emptyset$.
\end{proof}

In \cite{HuckabaKeller1979}, it has been defined that a ring $R$ has Property (A), if each finitely generated ideal $I \subseteq Z(R)$ has a nonzero annihilator. In \cite{Nasehpour2011}, a generalization of this definition was given for modules. We define semimodules having Property (A) as follows:

\begin{definition}

\label{semimodulepropertyA}

We define an $S$-semimodule $M$ to have Property (A) if each finitely generated ideal $I \subseteq Z_S(M)$ has a nonzero annihilator in $M$.
\end{definition}

The Prime Avoidance Lemma is one of the most important theorems in commutative ring theory \cite[Theorem 81]{Kaplansky1970}. A more general result called Prime Avoidance Lemma for Semirings has been given in \cite[Lemma 3.11]{HetzelLufi2009}. Therefore, we bring the following result without proving it:

\begin{theorem}[Prime Avoidance Theorem for Semirings]
	
	\label{PATsemirings}
	
	Let $S$ be a semiring, $I$ an ideal, and $\textbf{p}_i$ ($1 \leq i \leq n$) subtractive prime ideals of $S$. If $I \subseteq \cup_{i=1}^n \textbf{p}_i$, then $I \subseteq \textbf{p}_i$ for some $i$.
	
	\begin{proof} The proof is exactly the same as the proof given for the Prime Avoidance Theorem in commutative ring theory \cite[Theorem 81]{Kaplansky1970}. Therefore, its proof is omitted here.
	\end{proof}  
	
\end{theorem}

\begin{proposition}
If an $S$-semimodule $M$ has very few zero-divisors, then $M$ has Property (A).

\begin{proof}
Use Theorem \ref{PATsemirings}.
\end{proof}

\end{proposition}

Let $M$ be an $S$-semimodule. We say an element $s\in S$ is $M$-regular if it is not a zero-divisor on $M$, i.e. $s\notin Z_S(M)$. We say an ideal of $S$ is $M$-regular if it contains an $M$-regular element. Note that if $G$ is a commutative monoid and $f =s_1 X^{g_1} + \cdots + s_n X^{g_n}$ is an element of the monoid semiring $S[G]$, then the content of $f$, denoted by $c(f)$, is defined to be the finitely generated ideal $(s_1 , \ldots, s_n)$ of $S$.

\begin{theorem}

\label{PropertyAThm1}

Let $G$ be a cancellative torsion-free commutative monoid and $M$ be an $S$-semimodule. Then the following statements are equivalent:

\begin{enumerate}
 \item The $S$-semimodule $M$ has Property (A).
 \item For all $f \in S[G]$, $f$ is $M[G]$-regular if and only if $c(f)$ is $M$-regular.
\end{enumerate}

\begin{proof}
 $(1) \Rightarrow (2)$: Let the $S$-semimodule $M$ have Property (A). If $f \in S[G]$ is $M[G]$-regular, then $f \cdot m \not= 0$ for all nonzero $m \in M$ and so, $c(f) \cdot m \not= (0)$ for all nonzero $m \in M$ and according to the definition of Property (A), $c(f) \not\subseteq Z_S(M)$. This means that $c(f)$ is $M$-regular.

 Now let $c(f)$ be $M$-regular. So $c(f) \not\subseteq Z_S(M)$ and this means that $c(f)\cdot m \not= (0)$ for all nonzero $m \in M$ and hence $f \cdot m \not= 0$ for all nonzero $m \in M$. Since $G$ is a cancellative torsion-free commutative monoid, by Theorem \ref{mccoysemigroupsemimodule}, $f$ is not a zero-divisor on $M[G]$, i.e. $f$ is $M[G]$-regular.

$(2) \Rightarrow (1)$: Let $I$ be a finitely generated ideal of $S$ such that $I \subseteq Z_S(M)$. Then there exists an $f \in S[G]$ such that $c(f) = I$. But $c(f)$ is not $M$-regular, therefore, according to our assumption, $f$ is not $M[G]$-regular. Therefore, there exists a nonzero $m \in M$ such that $f\cdot m = 0$ and this means that $I\cdot m = (0)$, i.e. $I$ has a nonzero annihilator in $M$ and the proof is complete.
\end{proof}

\end{theorem}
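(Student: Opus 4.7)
The proof will hinge on the McCoy-type Theorem \ref{mccoysemigroupsemimodule}, which converts any zero-divisor of $M[G]$ into a nonzero annihilating constant in $M$. With that tool in hand, both implications reduce to routine bookkeeping with content ideals.

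For $(1) \Rightarrow (2)$, I would handle the two directions of the biconditional separately. Assume first that $c(f)$ is $M$-regular; if $f$ were a zero-divisor on $M[G]$, Theorem \ref{mccoysemigroupsemimodule} would supply a nonzero $b \in M$ with $f \cdot b = 0$, forcing $a_i b = 0$ for every coefficient $a_i$ of $f$, and hence $c(f) \subseteq \Ann(b) \subseteq Z_S(M)$, a contradiction. Conversely, if $c(f)$ is not $M$-regular, i.e.\ $c(f) \subseteq Z_S(M)$, then since $c(f)$ is finitely generated by the coefficients of $f$, Property (A) yields a nonzero $m \in M$ with $c(f) \cdot m = 0$, and thus $f \cdot m = 0$, showing $f$ is a zero-divisor on $M[G]$. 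Notice that Property (A) is needed only in the second direction; the first direction relies solely on Theorem \ref{mccoysemigroupsemimodule}.

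For $(2) \Rightarrow (1)$, given a finitely generated ideal $I = (a_0, \ldots, a_n) \subseteq Z_S(M)$, I would realize $I$ as a content: choose $n+1$ distinct elements $u_0, \ldots, u_n$ of $G$ (any nontrivial cancellative torsion-free commutative monoid is infinite, so there is room) and set $f = \sum_{i=0}^{n} a_i X^{u_i} \in S[G]$, so that $c(f) = I$. Since $c(f) \subseteq Z_S(M)$, it fails to be $M$-regular, so (2) forces $f$ to be a zero-divisor on $M[G]$. Applying Theorem \ref{mccoysemigroupsemimodule} once more, I obtain a nonzero $b \in M$ with $f \cdot b = 0$, whence $a_i b = 0$ for all $i$ and therefore $I \cdot b = 0$, producing the required nonzero annihilator in $M$.

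The main obstacle is simply recognizing that Theorem \ref{mccoysemigroupsemimodule} is the right lever at both critical moments; once that is in place, the rest is a transparent passage between the finitely generated ideal $c(f) = (a_0, \ldots, a_n)$ and the element $f = \sum a_i X^{u_i}$ it represents.
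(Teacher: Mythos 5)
Your proof is correct and follows essentially the same route as the paper's: both implications are handled by passing between $f$ and its content via Theorem \ref{mccoysemigroupsemimodule} and Property (A). Your write-up is in fact slightly more careful than the paper's in two places — you explicitly construct $f$ with $c(f)=I$ (justifying that $G$ has enough elements) and you explicitly invoke Theorem \ref{mccoysemigroupsemimodule} in $(2)\Rightarrow(1)$ to convert a zero-divisor of $M[G]$ into a nonzero constant annihilator in $M$, a step the paper leaves implicit.
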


\begin{theorem}

\label{veryfewzero-divisor4}

Let $M$ be an $S$-semimodule and $G$ a cancellative torsion-free commutative monoid. Then the $S[G]$-semimodule $M[G]$ has very few zero-divisors if and only if the $S$-semimodule $M$ has very few zero-divisors.

\begin{proof}
 $(\Leftarrow)$: Let $Z_S(M) = {\textbf{p}_1}\cup {\textbf{p}_2}\cup \cdots \cup {\textbf{p}_n}$, where ${\textbf{p}_i} \in \Ass_S(M)$ for all $1 \leq i \leq n$. First, we show that $Z_{S[G]}(M[G]) = {\textbf{p}_1}[G]\cup {\textbf{p}_2}[G]\cup \cdots \cup {\textbf{p}_n}[G]$. Let $f \in Z_{S[G]}(M[G])$. So, there exists an $m \in M- \lbrace 0 \rbrace $ such that $f\cdot m = 0$ and so, $c(f)\cdot m = (0)$. Therefore, $c(f) \subseteq Z_S(M)$ and this means that $c(f) \subseteq {\textbf{p}_1}\cup {\textbf{p}_2}\cup \cdots \cup {\textbf{p}_n}$ and according to the Prime Avoidance Theorem for Semirings (Theorem \ref{PATsemirings}), we have $c(f) \subseteq {\textbf{p}_i}$, for some $1 \leq i \leq n$ and therefore, $f \in {\textbf{p}_i}[G]$. Now let $f \in {\textbf{p}_1}[G]\cup {\textbf{p}_2}[G]\cup \cdots \cup {\textbf{p}_n}[G]$. Therefore, there exists an $i$ such that $f \in {\textbf{p}_i}[G]$. So, $c(f) \subseteq {\textbf{p}_i}$ and $c(f)$ has a nonzero annihilator in $M$ and this means that $f$ is a zero-divisor on $M[G]$. Note that by Corollary \ref{assprimemonoidsemiring}, ${\textbf{p}_i}[G] \in \Ass_{R[G]}(M[G])$ for all $1 \leq i \leq n$.

$(\Rightarrow)$: Let $Z_{S[G]}(M[G])= \cup _{i=1}^n \textbf{q}_i$, where $\textbf{q}_i \in \Ass_{S[G]}(M[G])$ for all $1\leq i \leq n$. Therefore, $Z_G(M) = \cup _{i=1}^n (\textbf{q}_i \cap S)$. Without loss of generality, we can assume that $\textbf{q}_i \cap S \nsubseteq \textbf{q}_j \cap S$ for all $i \neq j$. Now we prove that $\textbf{q}_i \cap S \in \Ass_S(M)$ for all $1 \leq i \leq n$. Consider $h\in M[G]$ such that $\textbf{q}_i = \Ann (h)$ and $h = m_1X^{g_1}+m_2X^{g_2}+ \cdots+ m_nX^{g_n}$, where $m_1, \ldots,m_n \in M$ and $g_1, \ldots ,g_n \in G$. It is easy to see that $\textbf{q}_i \cap S = \Ann (c(h)) \subseteq \Ann(m_1) \subseteq Z_S(M)$ and by Theorem \ref{PATsemirings}, $\textbf{q}_1 \cap S =\Ann(m_1)$.
\end{proof}
\end{theorem}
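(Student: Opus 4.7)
The plan is to reduce both directions to a statement about contents and annihilators, using McCoy's theorem (Theorem \ref{mccoysemigroupsemimodule}) together with Prime Avoidance (Theorem \ref{PATsemirings}).

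For the $(\Leftarrow)$ direction, starting from $Z_S(M) = \textbf{p}_1 \cup \cdots \cup \textbf{p}_n$ with each $\textbf{p}_i = \Ann(m_i) \in \Ass_S(M)$, I would try to establish the equality $Z_{S[G]}(M[G]) = \textbf{p}_1[G] \cup \cdots \cup \textbf{p}_n[G]$ and then invoke Corollary \ref{assprimemonoidsemiring} to certify the summands as associated primes of $M[G]$. Given $f \in Z_{S[G]}(M[G])$, McCoy's theorem supplies a nonzero $b \in M$ with $f b = 0$, which translates into $c(f) \cdot b = 0$, so $c(f) \subseteq Z_S(M) = \bigcup_i \textbf{p}_i$. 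Since every $\textbf{p}_i$ is an annihilator ideal it is automatically subtractive, so Prime Avoidance places $c(f)$ inside a single $\textbf{p}_i$, giving $f \in \textbf{p}_i[G]$. The reverse inclusion is direct: $f \in \textbf{p}_i[G]$ means $c(f) \subseteq \textbf{p}_i = \Ann(m_i)$, whence $f \cdot m_i = 0$ with $m_i \neq 0$.

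For the $(\Rightarrow)$ direction, suppose $Z_{S[G]}(M[G]) = \textbf{q}_1 \cup \cdots \cup \textbf{q}_n$ with $\textbf{q}_i = \Ann_{S[G]}(h_i) \in \Ass_{S[G]}(M[G])$. Contracting to $S$, an element of $S$ annihilates a nonzero $h \in M[G]$ iff it annihilates each of its nonzero coefficients, so $Z_S(M) = \bigcup_i (\textbf{q}_i \cap S)$. After discarding any $\textbf{q}_i \cap S$ that sits inside another, the heart of the proof is to identify each surviving $\textbf{q}_i \cap S$ with some $\Ann(m) \in \Ass_S(M)$. Writing $h_i = \sum_{j=1}^k m_j X^{g_j}$ with the $m_j$ nonzero, the identity $\textbf{q}_i \cap S = \bigcap_j \Ann(m_j)$ yields the chain $\textbf{q}_i \cap S \subseteq \Ann(m_1) \subseteq Z_S(M) = \bigcup_\ell (\textbf{q}_\ell \cap S)$. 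Each $\textbf{q}_\ell \cap S$ is a subtractive prime of $S$ (primeness is inherited from $\textbf{q}_\ell$, and subtractivity descends from the subtractivity of the annihilator $\textbf{q}_\ell$ in $S[G]$), so Prime Avoidance forces $\Ann(m_1) \subseteq \textbf{q}_\ell \cap S$ for some $\ell$. Non-redundancy then pins $\ell = i$, squeezing $\textbf{q}_i \cap S = \Ann(m_1) \in \Ass_S(M)$.

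I expect the main pitfall to be making sure Prime Avoidance genuinely applies at every use, which requires tracking that every relevant prime is subtractive; this ultimately rests on the clean fact that any annihilator ideal in a semimodule is subtractive, so both the $\textbf{p}_i$ on the $(\Leftarrow)$ side and the contractions $\textbf{q}_\ell \cap S$ on the $(\Rightarrow)$ side inherit subtractivity from being annihilators.
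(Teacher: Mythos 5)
Your proposal is correct and follows essentially the same route as the paper's own proof: McCoy's theorem (Theorem \ref{mccoysemigroupsemimodule}) plus Prime Avoidance (Theorem \ref{PATsemirings}) to get $Z_{S[G]}(M[G]) = \bigcup_i \textbf{p}_i[G]$ in one direction, and contraction of the associated primes $\textbf{q}_i$ together with the chain $\textbf{q}_i \cap S \subseteq \Ann(m_1) \subseteq Z_S(M)$ and non-redundancy in the other. The only difference is that you spell out the subtractivity of annihilator ideals needed to invoke Prime Avoidance and the final step pinning $\textbf{q}_i \cap S = \Ann(m_1)$, details the paper leaves implicit.
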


Let us recall that a semiring $S$ is called a weak Gaussian semiring, if $c(f)c(g) \subseteq \sqrt {c(fg)} $ for all $f,g \in S[X]$ \cite[Definition 18]{Nasehpour2016}. Also note that a semiring $S$ is weak Gaussian if and only if each of its prime ideals is subtractive \cite[Theorem 19]{Nasehpour2016}. Now let, for the moment, $S$ be a weak Gaussian semiring and $M$ an $S$-semimodule such that the set $Z_S(M)$ of zero-divisors of $M$ is a finite union of prime ideals. One can consider $Z_S(M)= \cup _{i=1}^n \textbf{p}_i$ such that $\textbf{p}_i \nsubseteq \cup _{j=1 \wedge j \neq i}^n \textbf{p}_j$ for all $ 1\leq i \leq n$. Obviously, we have $\textbf{p}_i \nsubseteq \textbf{p}_j$ for all $i \neq j$. Also, it is easy to check that, if $Z_S(M)= \cup _{i=1}^n \textbf{p}_i$ and $Z_S(M)= \cup _{k=1}^m \textbf{q}_k$ such that $\textbf{p}_i \nsubseteq \textbf{p}_j$ for all $i \neq j$ and $\textbf{q}_k \nsubseteq \textbf{q}_l$ for all $k \neq l$, then $m=n$ and $\{\textbf{p}_1, \ldots,\textbf{p}_n\}=\{\textbf{q}_1, \ldots,\textbf{q}_n\}$, i.e., these prime ideals are uniquely determined (For the proof, we have the permission to use Theorem \ref{PATsemirings}, since each prime ideal of a weak Gaussian semiring is subtractive). Now we give the following definition:

\begin{definition}

\label{vfzddefdegreeN}

 An $S$-semimodule $M$ is said to have few zero-divisors of degree $n$, if $Z_S(M)$ is a finite union of $n$ prime ideals $\textbf{p}_1, \ldots,\textbf{p}_n$ of $S$ such that $\textbf{p}_i \nsubseteq \textbf{p}_j$ for all $i \neq j$.
\end{definition}

\begin{theorem}

\label{vfzdThmsemimodule}

Let $S$ be a weak Gaussian semiring, $M$ an $S$-semimodule, and $G$ a cancellative torsion-free commutative monoid. Then the $S[G]$-semimodule $M[G]$ has few zero-divisors of degree $n$ if and only if the $S$-semimodule $M$ has few zero-divisors of degree $n$ and Property (A).

\begin{proof}

$(\Leftarrow)$: By taking into consideration of this assumption that the $S$-semimodule $M$ has Property (A), similar to the proof of Theorem \ref{veryfewzero-divisor4}, if $Z_S(M)= \cup _{i=1}^n \textbf{p}_i$, then $Z_{S[G]}(M[G])= \cup _{i=1}^n \textbf{p}_i[G]$. Also, it is obvious that $\textbf{p}_i[G] \subseteq \textbf{p}_j[G]$ if and only if $\textbf{p}_i \subseteq \textbf{p}_j$, for all $1\leq i,j \leq n$. So, these two imply that the $S[G]$-semimodule $M[G]$ has few zero-divisors of degree $n$.

$(\Rightarrow)$: Note that $Z_S(M) \subseteq Z_{S[G]}(M[G])$. It is easy to check that if $Z_{R[G]}(M[G])= \cup _{i=1}^n \textbf{q}_i$, where $\textbf{q}_i$ is a prime ideal of $S[G]$ for each $1\leq i \leq n$, then $Z_S(M) = \cup _{i=1}^n (\textbf{q}_i \cap S)$. Now we prove that the $S$-semimodule $M$ has Property (A). Let $I \subseteq Z_S(M)$ be a finitely generated ideal of $S$. Choose $f\in S[G]$ such that $I=c(f)$. So, $c(f) \subseteq Z_S(M)$ and obviously, $f \in Z_{S[G]}(M[G])$ and according to Theorem \ref{mccoysemigroupsemimodule}, there exists a nonzero $m\in M$ such that $f\cdot m=0$. This means that $I\cdot m=0$ and $I$ has a nonzero annihilator in $M$. Consider that by a similar discussion in $(\Leftarrow)$, the $S$-semimodule $M$ has few zero-divisors obviously not less than degree $n$ and this completes the proof.
\end{proof}

\end{theorem}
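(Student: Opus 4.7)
The plan is to mirror the structure of Theorem \ref{veryfewzero-divisor4}, using Property (A) as the bridge that lifts a decomposition of $Z_S(M)$ to one of $Z_{S[G]}(M[G])$ and McCoy's Theorem (Theorem \ref{mccoysemigroupsemimodule}) as the bridge going the other way. The uniqueness of irredundant decompositions into incomparable primes, recorded in the discussion immediately preceding Definition \ref{vfzddefdegreeN}, will then be used to match the degree $n$ on both sides. Prime Avoidance (Theorem \ref{PATsemirings}) is the recurring tool for squeezing a content ideal into a single prime of the decomposition.

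For $(\Leftarrow)$, I would write $Z_S(M) = \bigcup_{i=1}^n \textbf{p}_i$ with $\textbf{p}_i \nsubseteq \textbf{p}_j$ for $i \neq j$ and aim first for the set-theoretic identity $Z_{S[G]}(M[G]) = \bigcup_{i=1}^n \textbf{p}_i[G]$. The inclusion $\subseteq$ uses McCoy's Theorem: any $f \in Z_{S[G]}(M[G])$ is annihilated by a nonzero $b \in M$, so $c(f) \subseteq Z_S(M)$, and Prime Avoidance then places $c(f) \subseteq \textbf{p}_i$ for some $i$, giving $f \in \textbf{p}_i[G]$. The reverse inclusion is where Property (A) plays its role: for $f \in \textbf{p}_i[G]$ the finitely generated ideal $c(f) \subseteq \textbf{p}_i \subseteq Z_S(M)$ has a nonzero annihilator $m \in M$ by Property (A), and hence $f \cdot m = 0$. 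Combined with Theorem \ref{WDMcriteria5} (which upgrades $\textbf{p}_i[G]$ to a prime of $S[G]$) and the identity $\textbf{p}_i[G] \cap S = \textbf{p}_i$ (which transports incomparability upward), this exhibits $M[G]$ as having few zero-divisors of degree $n$.

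For $(\Rightarrow)$, write $Z_{S[G]}(M[G]) = \bigcup_{i=1}^n \textbf{q}_i$ in irredundant form. Contracting to $S$ via the embedding $M \hookrightarrow M[G]$ yields $Z_S(M) = \bigcup_{i=1}^n (\textbf{q}_i \cap S)$, each contraction being prime. To extract Property (A), given a finitely generated $I \subseteq Z_S(M)$, realize $I = c(f)$ for some $f \in S[G]$ and apply Prime Avoidance inside $S[G]$ to the inclusion $c(f) \subseteq \bigcup_i \textbf{q}_i$, landing $f$ in some $\textbf{q}_j \subseteq Z_{S[G]}(M[G])$; McCoy's Theorem then produces a nonzero $b \in M$ with $f \cdot b = 0$, whence $I \cdot b = 0$. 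For the degree count, irredundantly shrink $Z_S(M)$ to a union of $m \leq n$ incomparable primes; feeding that decomposition through the already-proved $(\Leftarrow)$ direction produces an irredundant decomposition of $Z_{S[G]}(M[G])$ into $m$ extended primes, and the uniqueness recorded before Definition \ref{vfzddefdegreeN} forces $m = n$.

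The main obstacle is keeping track of the subtractivity required for Prime Avoidance at every application: the $\textbf{p}_i$ must be subtractive so that Theorem \ref{WDMcriteria5} yields primeness of the extensions $\textbf{p}_i[G]$ in $(\Leftarrow)$, and the $\textbf{q}_i$ (hence their contractions $\textbf{q}_i \cap S$) must be subtractive for Prime Avoidance on both sides in $(\Rightarrow)$. These conditions should be traced to the weak Gaussian hypothesis on $S$ together with the natural subtractivity of annihilator-type primes that make up $Z_S(M)$; once secured, the structural argument above runs without further complication.
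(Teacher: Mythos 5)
Your proposal is correct and follows essentially the same route as the paper's own proof: McCoy's Theorem plus Prime Avoidance for one inclusion, Property (A) for the other, contraction of the $\textbf{q}_i$ for the converse, and the content function to realize a finitely generated ideal as some $c(f)$. If anything, you are more explicit than the paper on two points it glosses over --- that the subtractivity needed for Prime Avoidance and for Theorem \ref{WDMcriteria5} must come from the weak Gaussian hypothesis here (the $\textbf{p}_i$ are arbitrary primes, not annihilators as in Theorem \ref{veryfewzero-divisor4}), and the uniqueness argument that actually pins the degree down to $n$ in the forward direction, where the paper only asserts it.
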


Let us recall that an $R$-module $M$ is said to be primal, if $Z_R(M)$ is an ideal of $R$ \cite{Dauns1997}. Similarly, we define primal semimodules as follows:

\begin{definition}

\label{PrimalsemimoduleDef}

We define an $S$-semimodule $M$ to be primal if $Z_S(M)$ is an ideal of $S$.

\end{definition}

It is easy to check that if $Z_S(M)$ is an ideal of $S$, then it is a prime ideal and therefore, the $S$-semimodule $M$ is primal if and only if $M$ has few zero-divisors of degree one.

\begin{example}

\label{primalexample}

Let $(P,+,0)$ be an idempotent commutative monoid and set $S=P \cup \{1\}$. Now extend addition on $S$ as $a+1=1+a=1$ for all $a\in S$ and define multiplication over $S$ as $ab=0$ for all $a,b \in P$ and $a\cdot 1=1 \cdot a=a$ for all $a\in S$. It is, then, easy to check that $(S,+,\cdot)$ is a semiring and $Z_S(S) = P$ is a prime ideal of $S$ and therefore, $S$ is a primal $S$-semimodule \cite[Proposition 20]{Nasehpour2016}.

\end{example}

\begin{corollary}

\label{PrimalResult}

Let $S$ be a weak Gaussian semiring, $M$ an $S$-semimodule, and $G$ a cancellative torsion-free commutative monoid. Then the $S[G]$-semimodule $M[G]$ is primal if and only if the $S$-semimodule $M$ is primal and has Property (A).
\end{corollary}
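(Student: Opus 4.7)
The plan is to reduce this corollary to the already-proved Theorem \ref{vfzdThmsemimodule} by specializing to $n = 1$, together with the observation (stated right after Definition \ref{PrimalsemimoduleDef}) that an arbitrary semimodule $N$ over an arbitrary semiring $T$ is primal if and only if $N$ has few zero-divisors of degree one. The whole argument is essentially a translation.

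First I would record the general equivalence: if $Z_T(N)$ is an ideal of $T$, then it is automatically prime, because $Z_T(N)$ is always closed under the zero-divisor relation; hence ``$N$ primal'' is the same as ``$Z_T(N)$ equals a single prime ideal $\mathbf p$'', which is precisely the $n=1$ case of Definition \ref{vfzddefdegreeN}. I apply this both over $S$, to the $S$-semimodule $M$, and over $S[G]$, to the $S[G]$-semimodule $M[G]$.

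Next, for the forward direction $(\Rightarrow)$, assume $M[G]$ is primal. Then $M[G]$ has few zero-divisors of degree one, so by Theorem \ref{vfzdThmsemimodule} (with $n=1$), $M$ has few zero-divisors of degree one and Property (A). By the equivalence above, $M$ is primal and has Property (A). Conversely $(\Leftarrow)$, if $M$ is primal and has Property (A), then $M$ has few zero-divisors of degree one and Property (A), so by the same Theorem \ref{vfzdThmsemimodule} the $S[G]$-semimodule $M[G]$ has few zero-divisors of degree one, which by the equivalence again means that $M[G]$ is primal.

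There is no real obstacle here; the only point worth isolating is the equivalence between primality and having few zero-divisors of degree one, which is already flagged in the paragraph preceding the corollary, and the harmless typo ``$R$-semimodule'' in the statement (which should read ``$S$-semimodule''). Once that equivalence is invoked, the corollary is an immediate specialization of Theorem \ref{vfzdThmsemimodule} to $n=1$ and requires no further calculation.
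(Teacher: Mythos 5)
Your proposal is correct and is exactly the argument the paper intends: the corollary is stated without proof immediately after the remark that a semimodule is primal if and only if it has few zero-divisors of degree one, so it is meant to be read as the $n=1$ case of Theorem \ref{vfzdThmsemimodule}. Your added justification that $Z_S(M)$ being an ideal forces it to be prime (and hence that primality coincides with Definition \ref{vfzddefdegreeN} for $n=1$) is the same observation the paper flags as ``easy to check,'' so nothing further is needed.
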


Auslander's Zero-Divisor Theorem in module theory states that if $R$ is a Noetherian local ring, $M$ is an $R$-module of finite type and finite projective dimension and $r\in R$ is not a zero-divisor on $M$, then $r$ is not a zero-divisor on $R$ (cf. \cite[p. 8]{Hochster1975}, \cite[Remark 9.4.8]{BrunsHerzog1998}). In \cite{NasehpourAuslander}, we have defined an $R$-module $M$ to be Auslander, if $Z(R) \subseteq Z_R(M)$. This inspires us to give the following definition:

\begin{definition}

\label{AuslanderSemimodule}

We define an $S$-semimodule $M$ to be an Auslander semimodule, if $Z(S) \subseteq Z_S(M)$.
\end{definition}

\begin{examples}
	Here we give some examples for Auslander semimodules:
	\begin{enumerate}
		\item If $S$ is an entire semiring, then obviously, any $R$-semimodule $M$ is Auslander.
		\item If for any nonzero $s\in S$, there is an $x\in M$ such that $s \cdot x \neq 0$, then $\Hom_S(M,M)$ is an Auslander $S$-semimodule, where by $\Hom_S(M,M)$, we mean the set of all $S$-endomorphisms on $M$ \cite[p. 159]{Golan1999}. Here is the proof: \begin{proof} Let $s\in Z_S(S)$. So, there is a nonzero $t\in S$ such that $s \cdot t=0$. Define $f_t : M \longrightarrow M$ by $f_t(x) = t \cdot x$. By assumption, $f_t$ is a nonzero element of $\Hom_R(M,M)$. Clearly, $sf_t = 0$. So, $s\in Z_S(\Hom(M,M))$ and this completes the proof.\end{proof}
		
		\item If $N$ is an $S$-subsemimodule of an $S$-semimodule $M$ and $N$ is Auslander, then clearly, $M$ is also Auslander. Therefore, if $M$ is an Auslander $S$-semimodule, then $M \oplus M^{\prime}$ is also an Auslander $S$-semimodule for any $S$-semimodule $M^{\prime}$. In particular, if $\{M_i\}_{i\in \Lambda}$ is a family of $S$-semimodules and there is an $i\in \Lambda$, say $i_0$, such that $M_{i_0}$ is an Auslander $S$-semimodule, then $\bigoplus_{i\in \Lambda} M_i$ and $\prod_{i\in \Lambda} M_i$ are both Auslander $S$-semimodules.
	\end{enumerate}

\end{examples}

\begin{theorem}

\label{AuslanderSemimoduleThm}

Let $M$ be an Auslander $S$-semimodule, have Property (A) and $G$ a cancellative torsion-free commutative monoid. Then $M[G]$ is an Auslander $S[G]$-semimodule.

\begin{proof}
Let $f\in Z(S[G])$. By Corollary \ref{mccoysemigroupsemiring}, there is a nonzero element $s\in S$ such that $f\cdot s = 0$. This implies that $c(f) \subseteq Z(S)$. But $M$ is an Auslander semimodule, so $Z(S) \subseteq Z_S(M)$, which implies that $c(f) \subseteq Z_S(M)$. On the other hand, $M$ has Property (A). So, $c(f)$ has a nonzero annihilator, which implies that $f\in Z_{S[G]}(M[G])$ and the proof is complete.
\end{proof}

\end{theorem}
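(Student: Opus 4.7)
The plan is to show directly that every zero-divisor of $S[G]$ lands in $Z_{S[G]}(M[G])$ by chaining together the three hypotheses: McCoy's Theorem for Semimodules (to pull a zero-divisor in $S[G]$ down to a content statement in $S$), the Auslander hypothesis $Z(S) \subseteq Z_S(M)$ (to move this content condition from $S$ to $M$), and Property (A) (to lift it back up to produce an annihilator in $M[G]$).

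First I would take an arbitrary $f \in Z(S[G])$. Applying Theorem \ref{mccoysemigroupsemimodule} to the $S$-semimodule $S$ itself, the hypothesis that $G$ is cancellative, torsion-free, and commutative yields a nonzero $s \in S$ with $f \cdot s = 0$. Comparing coefficients (via the monomial expansion of $f$), each coefficient of $f$ annihilates $s$, so the content $c(f) \subseteq Z(S)$. This is the only place where the monoid hypothesis on $G$ is used.

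Next, since $M$ is Auslander by assumption, $Z(S) \subseteq Z_S(M)$, and hence $c(f) \subseteq Z_S(M)$. The ideal $c(f)$ is finitely generated, so Property (A) for $M$ produces a nonzero $m \in M$ with $c(f) \cdot m = 0$. Expanding $f \cdot m$ monomially gives $f \cdot m = 0$ in $M[G]$, so $f \in Z_{S[G]}(M[G])$, finishing the argument.

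There is no real obstacle in this proof; the only delicate step is recognizing that McCoy's theorem (\ref{mccoysemigroupsemimodule}) should be applied to the regular semimodule $M = S$ rather than to $M$ itself, so that the nonzero annihilator is forced to live in $S$ and the content $c(f)$ can be squeezed into $Z(S)$ before invoking the Auslander inclusion. After that, Property (A) is precisely the tool designed to convert the resulting inclusion $c(f) \subseteq Z_S(M)$ into an honest annihilator inside $M$.
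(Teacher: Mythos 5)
Your proof is correct and follows exactly the same route as the paper's: McCoy's theorem applied to $S$ as a semimodule over itself to get $c(f)\subseteq Z(S)$, then the Auslander inclusion, then Property (A) to produce the annihilator in $M$. Your explicit remark that Theorem \ref{mccoysemigroupsemimodule} must be invoked for the regular semimodule is a helpful clarification of a step the paper leaves implicit.
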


Let us recall that if $R$ is a commutative ring with a nonzero identity, $M$ a unital $R$-module, and $Q$ the total ring of fractions of $R$, then $M$ is torsion-free if the natural map $M \rightarrow M \otimes Q$ is injective \cite[p. 19]{BrunsHerzog1998}. It is starightforward to see that $M$ is a torsion-free $R$-module if and only if $Z_R(M) \subseteq Z_R(R).$ Therefore, the notion of Auslander modules defined in \cite{NasehpourAuslander} is a kind of dual to the notion of torsion-free modules. Inspired by this, we define torsion-free semimodules as follows:

\begin{definition}

\label{Torsion-free-semimodule}

We define an $S$-semimodule $M$ to be torsion-free, if $Z_S(M) \subseteq Z(S)$.

\end{definition}

\begin{examples} Here we bring some examples for torsion-free semimodules:
	\begin{enumerate}
		
		\item Clearly, any free $S$-semimodule $F = \bigoplus S$ is torsion-free.
		
		\item Let us recall that if $M$ is an $S$-semimodule, then the content of $m\in M$, denoted by $c(m)$, is defined to be the intersection of all ideals $I$ of $S$ such that $m\in IM$. An $S$-semimdoule $M$ is called to be a content $S$-semimodule if $m \in c(m)M$, for all $m\in M$ \cite[Definition 24]{Nasehpour2016}. Now let $M$ be a content $S$-semimodule such that $c(sm)=sc(m)$, for all $s\in S$ and $m\in M$. Then $M$ is torsion-free and here is its proof: 
		
		\begin{proof} Let $s\in Z(M)$. So, by definition, there is a nonzero $m\in M$ such that $sm=0$. Clearly, this implies that $c(sm)=0$, and so, we have $sc(m)=0$. Note that since $M$ is a content semimodule, $c(m)=0$ if and only if $m=0$. This already implies that the ideal $c(m)$ is nonzero and so, $s\in Z(S)$, Q.E.D.\end{proof}
		
		\item It is straightforward to see that if $P_i$ is a family of torsion-free $S$-semimodules, then the $S$-semimodules $\bigoplus_i P_i$ and $\prod_i P_i$ are also torsion-free.
	\end{enumerate}
\end{examples}

\begin{definition} Let $M$ be an $S$-semimodule. We define the dual semimodule of $M$, denoted by $M^*$, to be the $S$-semimodule $M^* = \Hom_S(M,S)$, where by $\Hom_S(M,S)$, it is meant the set of all semimodule morphisms from $M$ into $S$.
	
	\end{definition} 

\begin{proposition}
	
	Let $M$ be an $S$-semimdoule. If the dual semimodule $M^*$ is nonzero, then it is torsion-free. 
	
	\begin{proof} Clearly, if $s$ is an element of $Z(M^*)$, then there is a nonzero semiring morphism $f\colon M \rightarrow S$ such that $sf = 0$. This means that there is an $m\in M$ such that $f(m) \neq 0$, while $sf(m) = 0$. But $f(m) \in S$. Therefore, $s\in Z(S)$ and the proof is complete.\end{proof} 
	  
\end{proposition}

\begin{theorem}

\label{Torsion-free-Thm1}

Let the semiring $S$ have property (A) and $G$ be a cancellative torsion-free commutative monoid. Then the $S[G]$-semimodule $M[G]$ is torsion-free if and only if the $S$-semimodule $M$ is torsion-free.

\begin{proof}

$(\Rightarrow)$: Let $s\in Z_S(M)$. Clearly, this implies that $s\in Z_{S[G]}(M[G])$. But the $S[G]$-semimodule $M[G]$ is torsion-free. Therefore, $Z_{S[G]}(M[G]) \subseteq Z_{S[G]}(S[G])$. So, $s\in Z_S(S)$.

$(\Leftarrow)$: Let $f\in Z_{S[G]}(M[G])$. By Theorem \ref{mccoysemigroupsemimodule}, there is a nonzero $m\in M$ such that $c(f) \cdot m=0$, which means that $c(f) \subseteq Z_S(M)$. Since $M$ is torsion-free, $c(f) \subseteq Z_S(S)$, and since $S$ has property (A), $f\in Z_{S[G]}(S[G])$ and the proof is complete.
\end{proof}

\end{theorem}

\section{McCoy Semialgebras}\label{sec:mccoysemialgebras}

The main purpose of this section is to introduce McCoy semialgebras. Our definition for McCoy semialgebras is inspired by a classical result in commutative algebra which states that if $R$ is a commutative ring and $f\in R[X]$ is a zero-divisor on $R[X]$, then there is a nonzero $r\in R$ such that $r \cdot f =0$ \cite[Theorem 2]{McCoy1942}. In order to define McCoy semialgebras, we need to recall the definition of content functions and to define Ohm-Rush semialgebras. We recall that an $R$-algebra $B$ is an Ohm-Rush algebra, if $R$ as an $R$-module is a content module \cite[Definition 2.1]{EpsteinShapiro2016}. The concepts of content modules and algebras were introduced and investigated in \cite{OhmRush1972}, \cite{EakinSilver1974}, and \cite{Rush1978}.

\begin{definition}
	
\label{OhmRushSemialgebra}
	
Let $S$ be a semiring and $B$ an $S$-semialgebra.
	
	\begin{enumerate}
		\item The function $c$ from $B$ into ideals of $S$ is called the content function if $c(f)$ is the intersection of all ideals $I$ of $S$ such that $f\in IB$ for each $f\in B$.
		
		\item We define an $S$-semialgebra $B$ to be an Ohm-Rush $S$-semialgebra, if $f\in c(f)B$ for each $f\in B$.
	\end{enumerate}
	
\end{definition}

The proof of the following proposition is straightforward, but we bring it here only for the sake of reference.

\begin{proposition}
	
	If $B$ is an Ohm-Rush $S$-semialgebra, then the following statements hold:
	
	\begin{enumerate}
		
		\item For any $f\in B$, $c(f) = (0) $ if and only if $f = 0$
		
		\item For any $f\in B$, $c(f)$ is a finitely generated ideal of $S$;
		
		\item For any ideal $I$ of $S$, $c(f) \subseteq I$ if and only if $f\in IB$;
		
		\item For each $f,g \in B$, $c(fg) \subseteq c(f) c(g)$.
		
	\end{enumerate}
	
\end{proposition}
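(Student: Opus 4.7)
The plan is to derive all four assertions directly from the defining property $f \in c(f)B$ together with the definition of $c(f)$ as the intersection of all ideals $I$ of $S$ with $f \in IB$. Nothing beyond this definition should be needed, and the proof will be essentially formal.

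For (1), I would first observe that if $c(f) = (0)$, then $f \in c(f) B = (0) \cdot B = \{0\}$, so $f = 0$. Conversely, if $f = 0$, then $f \in (0) \cdot B$, so the zero ideal is one of the ideals appearing in the intersection that defines $c(f)$, whence $c(f) \subseteq (0)$ and the result follows. For (3), the forward direction is immediate: if $c(f) \subseteq I$, then $f \in c(f)B \subseteq IB$. Conversely, if $f \in IB$, then $I$ is one of the ideals over which the intersection defining $c(f)$ is taken, so $c(f) \subseteq I$ by the very definition.

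For (2), which is the only step with any real content, I would use the Ohm--Rush hypothesis to write
\[
f \;=\; \sum_{i=1}^{n} a_i b_i \qquad (a_i \in c(f),\ b_i \in B),
\]
using the fact that elements of $c(f)B$ are finite $B$-linear combinations of elements of $c(f)$. Setting $J = (a_1, \ldots, a_n)$, one has $f \in JB$, so by (3) applied to $I = J$ we get $c(f) \subseteq J$. Since clearly $J \subseteq c(f)$ by choice of the $a_i$, equality holds and $c(f) = J$ is finitely generated. For (4), I would apply (3) to the ideal $I = c(f)c(g)$: since $f \in c(f)B$ and $g \in c(g)B$, their product satisfies
\[
fg \;\in\; \bigl(c(f)B\bigr)\bigl(c(g)B\bigr) \;\subseteq\; c(f)c(g)\,B,
\]
and (3) then yields $c(fg) \subseteq c(f)c(g)$.

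There is no real obstacle here; every step is a direct unwinding of the definition. The only mild subtlety is (2), where one must be careful that $c(f)B$ consists of \emph{finite} $B$-linear combinations from $c(f)$, which is what allows us to extract a finitely generated subideal $J$ of $c(f)$ with $f \in JB$, and then invoke (3) to force $c(f) \subseteq J$.
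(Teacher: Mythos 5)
Your proof is correct: the paper states this proposition without proof (calling it straightforward), and your argument is exactly the standard unwinding of the definition that is intended, including the one genuinely non-trivial point --- extracting a finitely generated subideal $J \subseteq c(f)$ with $f \in JB$ and then using (3) to force $c(f) = J$.
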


Imagine $B$ is an Ohm-Rush $S$-semialgebra and take $f\in B$ and $s\in S$. By definition, $f\in c(f) B$ and this implies that $s\cdot f \in s\cdot c(f) B$. So, $c(s\cdot f) \subseteq s \cdot c(f)$ and therefore, if $s \cdot c(f) = (0)$, then $c(s \cdot f) = (0)$, which implies that $s \cdot f = 0$. On the other hand, we know that if $B$ is an Ohm-Rush $R$-algebra, then $c (r \cdot f) = r \cdot c(f)$ for any $r\in R$ and $f\in B$ if and only if $B$ is a flat $R$-algebra \cite[Corollary 1.6]{OhmRush1972}. Therefore, for flat Ohm-Rush $R$-algebra $B$, $r \cdot f = 0$ implies $ r \cdot c(f) = (0)$ for any $r\in R$ and $f\in B$. So, the question arises if for any $S$-semialgebra $B$, $s \cdot f = 0$ implies $s \cdot c(f) = 0$. The following example shows that this is not the case even for some Ohm-Rush algebras.

\begin{example}

Let $(R,\textbf{m})$ be a discrete valuation ring, where $\textbf{m}$ is its unique maximal ideal. Let $B = R / \textbf{m}^2$. By \cite[Proposition 2.1]{OhmRush1972}, $B$ is an Ohm-Rush $R$-algebra. Now let $g$ and $f$ be in $B-\{0\}$ such that $gf = 0$. If we take $f = r + \textbf{m}^2$ and $g = s + \textbf{m}^2$, we have $r,s \notin \textbf{m}^2$, while $rs \in \textbf{m}^2$ and in the language of valuation rings, we get $0\leq v(r), v(s) \leq 1$, while $v(r) + v(s) = v(rs) \geq 2$. These two imply that $v(r) = v(s) = 1$. Obviously, in the $R$-algebra $B$, we have $sf = 0$, since $s(r+\textbf{m}^2) = rs  + \textbf{m}^2 = \textbf{m}^2$. Now let us calculate $c(f)$:

$c(f) = \bigcap \{I\in \Id(R) : f\in IB\} = \bigcap \{I\in \Id(R) : (r + \textbf{m}^2) \in I \cdot (R/\textbf{m}^2)\} = \bigcap \{I\in \Id(R) : (r + \textbf{m}^2) \in I/\textbf{m}^2 \} = \bigcap \{I\in \Id(R) : r \in I \} = (r)$ and obviously $sc(f) = s(r) = (sr) \neq (0)$, since $r$ and $s$ are nonzero and $R$ is an integral domain.

\end{example}

This argument is the base for our definition for McCoy semialgebras:

\begin{definition}

\label{McCoySemiAlgebraDef}

Let $B$ be an $S$-semialgebra. We define $B$ to be a McCoy $S$-semialgebra, if $B$ is an Ohm-Rush $S$-semialgebra and $g\cdot f = 0$ for $g,f \in B$ with $g \neq 0$ implies $s\cdot c(f) = 0$ for some nonzero $s\in S$.
\end{definition}

\begin{examples}

	\label{McCoyEx}
	
	Here we give some examples for McCoy semialgebras:
	
	\begin{enumerate}
		\item Let $X$ be an indeterminate on a semiring $S$. It is clear that by Corollary \ref{mccoysemigroupsemiring}, $S[X]$ is a McCoy $S$-semialgebra.
		
		\item Let $B$ be a content $S$-semialgebra \cite[Definition 30]{Nasehpour2016}, then by \cite[Proposition 31]{Nasehpour2016}, $B$ is a McCoy $S$-semialgebra.
	\end{enumerate}

\end{examples}

\begin{remark}
	Let us recall that David Fields, a student of Robert Gilmer, has proved that if $R$ is a commutative Noetherian ring and a nonzero power series $f\in R[[X]]$ is a zero-divisor in power series ring $R[[X]]$, then there exists a nonzero element $r\in R$ such that $ r\cdot f = 0$ \cite[Theorem 5]{Fields1971}. We also recall that if $B$ is a content $R$-algebra and a nonzero element $f\in B$ is a zero-divisor in $B$, then there exists a nonzero element $r\in R$ such that $r \cdot f =0$ \cite[Statement 6.1]{OhmRush1972}. In fact, David Field's Theorem for zero-divisors of formal power series is a corollary of this fact that formal power series $R[[X]]$ over a Noetherian ring $R$ is a content $R$-algebra, an important theorem that was recently proved by Epstein and Shapiro \cite[Theorem 2.6]{EpsteinShapiro2016AMS}. On the other hand, if $S$ is a Noetherian semiring,  $X$ is an indeterminate over $S$, and $f=s_0 + s_1 X + s_2 X^2 + \cdots + s_n X^n + \cdots $ is an element of $S[[X]]$, then $c(f)$ is the ideal generated by the coefficients of $f$ and $f\in c(f)S[[X]]$ \cite[Proposition 42]{Nasehpour2016}. Therefore, $S[[X]]$ is an Ohm-Rush $S$-semialgebra. Also, one can easily check that $c(sf)=sc(f)$ for all $s\in S$ and $f\in S[[X]]$ if $S$ is Noetherian. This motivates us to propose the following questions:
\end{remark}

\begin{questions}
	Let $S$ be a Noetherian semiring.
	
	\begin{enumerate}
		
		\item Is $S[[X]]$ a McCoy $S$-semialgebra? 
	
	\item Is $S[[X]]$ is a content $S$-semialgebra?
	
	\end{enumerate}
	
\end{questions}

Now we proceed to give more examples for McCoy semialgebras. First we recall that an $S$-semialgebra $B$ is called a weak content semialgebra if $B$ is an Ohm-Rush semialgebra and the content formula $c(f)c(g) \subseteq \sqrt {c(fg)}$ holds, for all $f,g \in B$ \cite[Definition 36]{Nasehpour2016}. Also, a semiring $S$ is said to be nilpotent-free, if $s^n = 0$ implies $s=0$ for all $s\in S$ and $n\in \mathbb N$. Now we give an interesting family of McCoy semialgebras in the following:

\begin{proposition}
	
	\label{NilMcCoySemialgebra}
	
	Let $S$ be a nilpotent-free semiring and $B$ a weak content $S$-semialgebra. Then $B$ is a McCoy $S$-semialgebra.
	
	\begin{proof}
		Let $gf = 0$, where $g,f \in B$ and $g\neq 0$. Consequently $c(g) c(f) \subseteq \sqrt {c(gf)} = \sqrt {0} = 0$. Now $c(g) \neq 0$, since $g\neq 0$. Take a nonzero element $s\in c(g)$. Obviously, $s\cdot c(f) = (0)$. Q.E.D.
	\end{proof}
	
\end{proposition}

\begin{remark}
	
Let $S$ be a semiring and $B$ be an Ohm-Rush $S$-semialgebra. In Examples \ref{McCoyEx}, we have already mentioned that if $B$ is a content $S$-semialgebra, then it is a McCoy $S$-semialgebra. On the other hand, in Proposition \ref{NilMcCoySemialgebra}, we have shown that if $S$ is nilpotent-free and $B$ is a weak content $S$-semialgebra, then $B$ is a McCoy $S$-semialgebra. 

In the following, we show that there exists a McCoy semialgebra that is not a weak content semialgebra:

Let $S$ be a semiring such that one of its prime ideals is not subtractive. Clearly, by Corollary \ref{mccoysemigroupsemiring}, $S[X]$ is a McCoy $S$-semialgebra, while by Theorem 19 in \cite{Nasehpour2016}, it is not a weak content $S$-semialgebra. For example, consider the idempotent semiring $S = \{ 0,u,1 \}$, where $1+u = u+1 = u$ \cite{LaGrassa1995}. It is clear that the ideal $\{0,u\}$ is prime but not subtractive. Now imagine $f=1+uX$ and $g=u+X$. It is easy to see that $fg = (1+uX)(u+X) = u+uX+uX^2$, $c(fg) = \{0,u\}$ and $c(f)c(g)=S$ while $\sqrt {c(fg)} = \sqrt {\{0,u\}} = \{0,u\}$ and this means that $c(f)c(g) \nsubseteq \sqrt {c(fg)}$, i.e., $S[X]$ is not a weak content $S$-semialgebra, while it is a McCoy $S$-semialgebra. From this discussion, we propose the following question:
	
\end{remark}

\begin{question}
	
	Is there any weak content semialgebra that is not a McCoy semialgebra?
		
\end{question}

Since content semialgebras in general and weak content semialgebras over nilpotent-free semirings are good examples for McCoy semialgebras, we devote the rest of this section to these semialgebras.

Let us note that if $R$ is a commutative ring with a nonzero identity and $G$ a cancellative torsion-free commutative monoid, then $c(f)c(g) \subseteq \sqrt {c(fg)}$, for all $f,g \in R[X]$. Still, there are some semirings that this content formula does not hold \cite[Example 17]{Nasehpour2016}. In fact, a semiring $S$ is called weak Gaussian if $c(f)c(g) \subseteq \sqrt {c(fg)}$, for all $f,g \in R[X]$. And it has been proved in Theorem 19 in \cite{Nasehpour2016} that a semiring $S$ is weak Gaussian if and only if each prime ideal of $S$ is subtractive. Now we give the following theorem which is a generalization of Theorem 19 in \cite{Nasehpour2016}:

\begin{theorem}

\label{WDMcriteria6}

Let $S$ be a semiring and $G$ a cancellative torsion-free commutative monoid. Then the following statements are equivalent:

\begin{enumerate}

\item $c(fg) \subseteq c(f)c(g) \subseteq \sqrt {c(fg)}$, for all $f,g \in S[G]$,
\item $\sqrt I$ is subtractive for each ideal $I$ of the semiring $S$,
\item Each prime ideal $\textbf{p}$ of $S$ is subtractive.

\end{enumerate}

\begin{proof}
$(1) \Rightarrow (2)$: Suppose $I$ is an ideal of $S$ and $a,b \in S$ such that $a+b, a \in \sqrt I$. We need to show that $b\in \sqrt I$. Let $v \in G-\{0\}$ and put $f=a+bX^v$ and $g=b+(a+b)X^v$. Then just like the proof of \cite[Theorem 19]{Nasehpour2016}, we have $b \in \sqrt I$.

$(2) \Rightarrow (3)$: Obvious.

$(3) \Rightarrow (1)$: Let each prime ideal $\textbf{p}$ of $S$ be subtractive. We need to show that $c(f)c(g) \subseteq \sqrt {c(fg)}$, for all $f,g \in S[G]$. Let $f,g \in S[G]$ and suppose that $\textbf{p}$ is a prime ideal of $S$ and $c(fg) \subseteq \textbf{p}$. Obviously, $fg \in \textbf{p}[G]$. Now by Theorem \ref{WDMcriteria5}, the ideal $\textbf{p}[G]$ is a prime ideal of $S[G]$ and so, either $f\in \textbf{p}[G]$ or $g\in \textbf{p}[G]$ and this means that either $c(f) \subseteq \textbf{p}$ or $c(g) \subseteq \textbf{p}$ and in any case $c(f)c(g) \subseteq \textbf{p}$. Consequently, by \cite[Proposition 7.28 (Krull's Theorem)]{Golan1999} - that says that $ \sqrt I = \bigcap_{\textbf{p}\in \Spec_I(S)} \textbf{p}$, where by $\Spec_I(S)$ we mean the set of all prime ideals of $S$ containing $I$, we have $c(f)c(g) \subseteq \bigcap_{\textbf{p}\in \Spec_{c(fg)}(S)} \textbf{p} = \sqrt {c(fg)}$ and the proof is complete.
\end{proof}

\end{theorem}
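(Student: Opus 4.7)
The plan is to establish the cyclic chain of implications $(1)\Rightarrow(2)\Rightarrow(3)\Rightarrow(1)$, following the template of the polynomial case \cite[Theorem~19]{Nasehpour2016} but substituting Theorem~\ref{WDMcriteria5} for its polynomial counterpart. The step $(2)\Rightarrow(3)$ is automatic: every prime $\textbf{p}$ satisfies $\sqrt{\textbf{p}}=\textbf{p}$, so if every radical is subtractive, so is every prime.

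For $(1)\Rightarrow(2)$, assume $a+b$ and $a$ belong to $\sqrt{I}$; I want to conclude $b\in\sqrt{I}$. I would pick a nonzero $v\in M$ and set $f=a+bX^{v}$, $g=b+(a+b)X^{v}$. Then $b^{2}\in(a,b)(b,a+b)=c(f)c(g)$. Expanding $fg=ab+(a^{2}+ab+b^{2})X^{v}+(ab+b^{2})X^{2v}$ and rewriting the three coefficients as $ab$, $a^{2}+b(a+b)$, and $b(a+b)$ exhibits each generator of $c(fg)$ as a sum of elements of $\sqrt{I}$; hence $c(fg)\subseteq\sqrt{I}$. Applying hypothesis~(1) then yields $b^{2}\in c(f)c(g)\subseteq\sqrt{c(fg)}\subseteq\sqrt{\sqrt{I}}=\sqrt{I}$, so $b\in\sqrt{I}$.

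For $(3)\Rightarrow(1)$, the inclusion $c(fg)\subseteq c(f)c(g)$ is immediate from the bilinear formula for products in $S[M]$, so only $c(f)c(g)\subseteq\sqrt{c(fg)}$ requires argument. I would fix $f,g\in S[M]$ and an arbitrary prime $\textbf{p}$ of $S$ containing $c(fg)$; then $fg\in\textbf{p}[M]$, and since $\textbf{p}$ is subtractive by hypothesis, Theorem~\ref{WDMcriteria5} guarantees that $\textbf{p}[M]$ is prime in $S[M]$. Hence $f\in\textbf{p}[M]$ or $g\in\textbf{p}[M]$, forcing $c(f)c(g)\subseteq\textbf{p}$ in either case. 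Intersecting over all such primes and invoking Krull's theorem $\sqrt{J}=\bigcap_{\textbf{p}\in\Spec_{J}(S)}\textbf{p}$ delivers $c(f)c(g)\subseteq\sqrt{c(fg)}$.

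The main obstacle is the bookkeeping in $(1)\Rightarrow(2)$: one must verify $c(fg)\subseteq\sqrt{I}$ without invoking subtractivity of $\sqrt{I}$, which is precisely what is being proved. The seemingly arbitrary choice $g=b+(a+b)X^{v}$, as opposed to a naive variant such as $g=b+aX^{v}$, is engineered so that every coefficient of $fg$ factors using only the ingredients $a$ and $a+b$ already known to lie in $\sqrt{I}$, thereby sidestepping any appeal to additive cancellation in $\sqrt{I}$ itself.
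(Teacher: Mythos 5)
Your proposal is correct and follows essentially the same route as the paper: the same cyclic chain of implications, the same auxiliary elements $f=a+bX^{v}$ and $g=b+(a+b)X^{v}$ for $(1)\Rightarrow(2)$, and the same appeal to Theorem \ref{WDMcriteria5} plus Krull's theorem for $(3)\Rightarrow(1)$. The only difference is that you spell out the coefficient bookkeeping for $(1)\Rightarrow(2)$, which the paper delegates to the citation of the polynomial case in \cite[Theorem 19]{Nasehpour2016}; your verification that $c(fg)\subseteq\sqrt{I}$ and that $b^{2}\in c(f)c(g)$ is exactly what that citation hides.
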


\begin{corollary}
	
	Let $S$ be a semiring and $G$ a cancellative torsion-free commutative monoid. Then the following statements hold:
	
	\begin{enumerate}
		\item If $S$ is weak Gaussian, then $S[G]$ is a weak content $S$-semialgebra.
		
		\item If $S$ is weak Gaussian and nilpotent-free, then $S[G]$ is a McCoy $S$-semialgebra.
	\end{enumerate}

\end{corollary}

In Theorem 2 of the paper \cite{Northcott1959}, Douglas Geoffrey Northcott (1916--2005) proves that if $R$ is a commutative ring with a nonzero identity and $G$ is a cancellative torsion-free commutative monoid, then $R[G]$ is a content $R$-algebra. In the following, we generalize this great result for subtractive semirings:

\begin{theorem}

\label{NorthcottContentSemialgebra}

Let $S$ be a semiring and $G$ a cancellative torsion-free commutative monoid. Then $S[G]$ is a content $S$-semialgebra if and only if $S$ is a subtractive semiring.

\begin{proof}
($\Rightarrow$): Let $I$ be an ideal of $S$. Take $v\in G-\{0\}$ and $a,b \in S$ such that $a+b, a \in I$. Define $f=1+X^v$ and $g=a+bX^v+aX^{2v}$. It is easy to see that $fg=a+(a+b)X^v+(a+b)X^{2v}+aX^{3v}$, $c(f)=S$, $c(g)=(a,b)$ and $c(fg)=(a+b,a)$. But according to our assumption, Dedekind-Mertens content formula holds and therefore, there exists an $m\in \mathbb N_0$ such that $c(f)^{m+1} c(g) = c(f)^m c(fg)$. This means that $(a,b) = (a+b,a)$, which implies $b\in (a+b,a) \subseteq I$ and $I$ is subtractive.

($\Leftarrow$): Take $f,g \in S[G]$. The same discussion, in the proof of Theorem \ref{WDMcriteria5}, shows that there exists a finitely generated torsion-free Abelian group $G_0$ such that $f,g \in S[G_0]$, which means that $f,g$ can be considered to be elements of a Laurent polynomial semiring with finite number of indeterminates. So, by Theorem 6 in \cite{Nasehpour2016}, Dedekind-Mertens content formula holds for $f,g$ and this finishes the proof.
\end{proof}

\end{theorem}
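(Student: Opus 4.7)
The plan is to split into two directions, each resting on a specific gadget. The statement $S[M]$ being a content $S$-semialgebra means it is Ohm-Rush (automatic for monoid semirings, since the content of $\sum s_i X^{g_i}$ is visibly $(s_1,\ldots,s_n)$) and satisfies the Dedekind-Mertens formula $c(f)^{m+1}c(g) = c(f)^m c(fg)$ for some $m \in \mathbb{N}_0$.

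For $(\Rightarrow)$, assume Dedekind-Mertens holds in $S[M]$, and let $I$ be an ideal of $S$ with $a, a+b \in I$; the aim is to place $b$ in $I$. I would fix any $v \in M-\{0\}$ and take the test pair $f = 1 + X^v$, $g = a + bX^v + aX^{2v}$. A direct multiplication yields $fg = a + (a+b)X^v + (a+b)X^{2v} + aX^{3v}$, so $c(f) = S$, $c(g) = (a,b)$, and $c(fg) = (a, a+b)$. Because $c(f) = S$, every power $c(f)^k$ collapses to $S$, and the Dedekind-Mertens relation reduces to the strict equality $(a,b) = (a,a+b)$. Hence $b = s(a+b) + ta$ for some $s,t\in S$, and since $a, a+b \in I$, we get $b \in I$, proving $I$ subtractive.

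For $(\Leftarrow)$, assume $S$ is subtractive and fix $f,g \in S[M]$. The supports of $f$ and $g$ are finite, so the subgroup $M_0$ of $\gp(M)$ they generate is a finitely generated torsion-free abelian group, hence isomorphic to $\mathbb{Z}^n$ for some $n$; this is the same reduction used in the proof of Theorem \ref{WDMcriteria5}. Thus $f, g$ lie in a Laurent polynomial semiring over $S$ in finitely many indeterminates, and after multiplying through by a suitable monomial $X^w$ (which is a unit and merely shifts coefficient positions, leaving the content unchanged) they can be viewed as ordinary polynomials over $S$. Dedekind-Mertens for this pair then follows from its known validity for polynomial semirings over subtractive semirings, namely Theorem 6 of \cite{Nasehpour2016}.

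The main obstacle is the $(\Rightarrow)$ direction, where the difficulty lies in guessing the right witnesses: the polynomials $f, g$ must be engineered so that $c(g)$ and $c(fg)$ differ precisely by swapping $b$ for $a+b$ while $c(f) = S$ neutralizes the powers in Dedekind-Mertens. The $(\Leftarrow)$ direction is essentially bookkeeping, but it is worth verifying explicitly that the passage from Laurent to ordinary polynomials via multiplication by a unit monomial preserves content, so that invoking \cite[Theorem 6]{Nasehpour2016} is legitimate.
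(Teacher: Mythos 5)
Your proposal is correct and follows essentially the same route as the paper's own proof: the forward direction uses the identical test pair $f=1+X^v$, $g=a+bX^v+aX^{2v}$ with $c(f)=S$ collapsing the Dedekind--Mertens powers to give $(a,b)=(a,a+b)$, and the reverse direction uses the same reduction to a finitely generated subgroup of $\gp(M)$ and the cited result for Laurent polynomial semirings. The only addition is your explicit remark that multiplying by a unit monomial preserves content, which is a harmless (and welcome) clarification rather than a different argument.
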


\begin{remark}

In Theorem 3 in \cite{Nasehpour2011}, it has been shown that if $R$ is a ring and $M$ is a commutative monoid and $R[M]$ is a content $R$-algebra, then $M$ is a cancellative torsion-free monoid. This is not necessarily the case if $R$ is a proper semiring. For example, let $\mathbb B = \{0,1\}$ be the Boolean semifield. Now let $M$ be a monoid with at least two elements. It is, then, easy yo see that the monoid semiring $\mathbb B [M]$ is an entire semiring. Also note that if $f \in \mathbb B [M]$ is nonzero, then $c(f) = \mathbb B$. Therefore, from all we said, we see that if $f,g \in \mathbb B [M]$ are both nonzero, then $c (fg) = \mathbb B = c(f) c (g)$. On the other hand, if either $f=0$ or $g=0$, then $c (fg) = c(f)c(g) = (0)$, which means the $\mathbb B [M]$ is a content (in fact, Gaussian) $\mathbb B$-semialgebra, while $M$ is quite arbitrary.
\end{remark}

We end this section with the following statement for McCoy semialgebras:

\begin{proposition}

Let $B$ be a McCoy $S$-semialgebra. If $S$ is an entire semiring, then so is $B$.

\begin{proof}
Let $g \cdot f = 0$, where $f,g \in B$ and $g \neq 0$. By definition, there exists a nonzero $s\in S$ such that $s\cdot c(f)= 0$. Since $S$ is entire, $c(f) = 0$ and finally $f =0$.
\end{proof}

\end{proposition}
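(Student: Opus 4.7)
The plan is to simply unwind the two definitions in play. Suppose $f,g \in B$ satisfy $g \cdot f = 0$ with $g \neq 0$; my aim is to force $f = 0$. First I would invoke the defining property of a McCoy semialgebra (Definition \ref{McCoySemiAlgebraDef}): there exists a nonzero element $s \in S$ such that $s \cdot c(f) = (0)$ in $S$.

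Next I would translate this back into $S$. Since $c(f)$ is an ideal of $S$, the equation $s \cdot c(f) = (0)$ says that $s \cdot a = 0$ for every $a \in c(f)$. Because $S$ is entire and $s \neq 0$, each such $a$ must be $0$, so $c(f) = (0)$.

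Finally I would apply item (1) of the proposition immediately following Definition \ref{OhmRushSemialgebra}, which is available since $B$ is in particular an Ohm-Rush $S$-semialgebra: for any $f \in B$ one has $c(f) = (0)$ if and only if $f = 0$. This yields $f = 0$, so any factorization of $0$ in $B$ with a nonzero first factor has a zero second factor, and $B$ is entire.

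There is no real obstacle here: the argument is a three-line chain (McCoy property, entirety of $S$, Ohm-Rush property). The only point to double-check is the passage from $s \cdot c(f) = (0)$ to $c(f) = (0)$, which is just the elementwise use of entirety of $S$ applied to every element of the ideal $c(f)$, and requires no finiteness or generation hypothesis on $c(f)$.
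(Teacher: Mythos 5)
Your argument is correct and follows exactly the same route as the paper's proof: apply the McCoy property to obtain a nonzero $s$ with $s \cdot c(f) = (0)$, use entirety of $S$ to conclude $c(f) = (0)$, and then invoke the Ohm-Rush property ($c(f) = (0)$ iff $f = 0$) to finish. Your version merely spells out the elementwise use of entirety and the explicit appeal to item (1) of the proposition after Definition \ref{OhmRushSemialgebra}, both of which the paper leaves implicit.
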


\section{Zero-Divisors of McCoy Semialgebras}\label{sec:zdmccoys}

In this section, we explore some properties of the set of zero-divisors of McCoy semialgebras. We also introduce the concept of strong Krull primes of semirings. We start our investigation with the following lemma:

\begin{lemma}

\label{sub1}

Let $S$ be a semiring and $\textbf{p}_i$ a subtractive prime ideal of $S$ for any $1 \leq i \leq n$. If $B$ is a McCoy $S$-semialgebra, then $Z(S) \subseteq \bigcup^n_{i=1} \textbf{p}_i$ implies that $Z(B) \subseteq \bigcup^n_{i=1} (\textbf{p}_i B)$.

\begin{proof}

Let $f\in Z(B)$. Since $B$ is a McCoy $S$-semialgebra, there is some $s\in S-\{0\}$ such that $s \cdot c(f) = 0$. This means that $c(f) \subseteq Z(S)$. But $Z(S) \subseteq \bigcup^n_{i=1} \textbf{p}_i$, so by Theorem \ref{PATsemirings}, there is an $1 \leq i \leq n$ such that $c(f) \subseteq \textbf{p}_i$ and finally $f\in \textbf{p}_i B$.
\end{proof}

\end{lemma}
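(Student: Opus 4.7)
The plan is to take an arbitrary $f \in Z(B)$ and locate it inside some $\textbf{p}_i B$. The key observation is that the McCoy hypothesis translates the zero-divisor relation on $B$ into a statement about the content ideal $c(f) \subseteq S$; once $c(f)$ is known to lie in the union of the subtractive primes $\textbf{p}_i$, Prime Avoidance for semirings will confine $c(f)$ to a single $\textbf{p}_i$, and the Ohm-Rush property will then convert this back into a statement about $f$ itself.

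First I would fix $f \in Z(B)$ and choose $g \in B \setminus \{0\}$ with $g \cdot f = 0$. Since $B$ is a McCoy $S$-semialgebra, Definition \ref{McCoySemiAlgebraDef} produces a nonzero $s \in S$ with $s \cdot c(f) = 0$. Thus every element of $c(f)$ is annihilated by a nonzero element of $S$, which gives $c(f) \subseteq Z(S)$, and combining with the hypothesis yields $c(f) \subseteq \bigcup_{i=1}^{n} \textbf{p}_i$.

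Next, because $B$ is Ohm-Rush, the content $c(f)$ is a finitely generated ideal of $S$ by part (2) of the proposition following Definition \ref{OhmRushSemialgebra}. The ideals $\textbf{p}_i$ are subtractive prime ideals by hypothesis, so Theorem \ref{PATsemirings} (Prime Avoidance for Semirings) applies and delivers an index $i$ with $c(f) \subseteq \textbf{p}_i$. Finally, part (3) of that proposition (the Ohm-Rush characterization $c(f) \subseteq I \iff f \in IB$) converts this to $f \in \textbf{p}_i B$, completing the argument.

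There is no serious obstacle: essentially all the work has been packaged into the two auxiliary results being invoked. The only substantive move is the passage from $s \cdot c(f) = 0$ with $s \neq 0$ to $c(f) \subseteq Z(S)$, and this is immediate from the definition of a zero-divisor on $S$. The subtractivity of each $\textbf{p}_i$ is precisely what licenses the use of Theorem \ref{PATsemirings}, and the Ohm-Rush hypothesis is precisely what is needed both to guarantee finite generation of $c(f)$ and to perform the final conversion $c(f) \subseteq \textbf{p}_i \Longrightarrow f \in \textbf{p}_i B$.
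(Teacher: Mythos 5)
Your proof is correct and follows essentially the same route as the paper's: McCoy gives a nonzero $s$ with $s\cdot c(f)=0$, hence $c(f)\subseteq Z(S)\subseteq\bigcup_i\textbf{p}_i$, Prime Avoidance (Theorem \ref{PATsemirings}) pins $c(f)$ inside a single $\textbf{p}_i$, and the Ohm-Rush equivalence $c(f)\subseteq I\iff f\in IB$ finishes. The only difference is that you make explicit two points the paper leaves implicit (finite generation of $c(f)$, which is not actually needed for the stated Prime Avoidance Theorem, and the final conversion via part (3) of the Ohm-Rush proposition), which is fine.
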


Let us recall that if $R$ is a ring and $B$ is an Ohm-Rush $R$-algebra, then $B$ is flat if and only if $c(r \cdot f) = r \cdot c(f)$ for any $r\in R$ and $f\in B$ \cite[Corollary 1.6]{OhmRush1972}. Since the corresponding property for the content function on semialgebras is useful as we will see in this section very soon, we believe it is a good idea to give a name to this property. It is good to mention that the use of the term ``homogeneous" in the following definition stems from the definition of ``homogeneous functions" in mathematical analysis (cf. \cite[Chap. 1, §4.1]{Widder1989}).

\begin{definition}

\label{homogeneouscontentfunction}

Let $B$ be an Ohm-Rush $S$-semialgebra. We say the content function $c$ is homogeneous (of degree 1), if $c(s \cdot f) = s \cdot c(f)$ for each $s\in S$ and $f\in B$.
\end{definition}

\begin{proposition} Let $S$ be a semiring. Then the following statements hold:
	
	\begin{enumerate}
		\item If $B= S[G]$, where $G$ is a commutative monboid, then the content function of the $S$-semialgebra $B$ is homogeneous.
		
		\item If $S$ is Noetherian, then the content function of the $S$-semialgebra $S[[X]]$ is homogeneous.
	\end{enumerate}

\begin{proof}
	
	(1): Let $f=s_1 X^{g_1} + \cdots + s_n X^{g_n}$ be an element of the monoid semiring $S[G]$. Then by using Proposition 23 in \cite{Nasehpour2016}, one can easily see that the content of $f$ is the ideal $(s_1, \dots, s_n)$. Therefore, $c(sf) = sc(f)$, for all $s\in S$.
	
	(2): Let $f=s_0 + s_1 X + \cdots + s_n X^n + \cdots$ be an element of $S[[X]]$ and set $A_f$ to be the ideal generated by the coefficients of $f$. If $S$ is Noetherian, then $A_f = c(f)$ \cite[Proposition 42]{Nasehpour2016}. Therefore, for any $s\in S$, we have the following:
	
	$$c(sf) = A_{sf} = sA_f = sc(f).$$ This finishes the proof.	
	\end{proof}
\end{proposition}

\begin{lemma}

\label{ann}

Let $B$ be an Ohm-Rush $S$-semialgebra with homogeneous $c$ and the corresponding homomorphism $\lambda$ of the $S$-semialgebra $B$ be injective. Then the following statements hold:

\begin{enumerate}

\item $\Ann_S(s)B = \Ann_B(\lambda (s))$ for any $s\in S$;

\item If $Z(B) = \bigcup \textbf{q}_i$, then $Z(S) = \bigcup (\textbf{q}_i \cap S)$.

\end{enumerate}

\begin{proof}
(1): $f \in \Ann_B(\lambda (s)) \Leftrightarrow s\cdot f = 0 \Leftrightarrow s \cdot c(f) = 0 \Leftrightarrow c(f) \subseteq \Ann_S(s) \Leftrightarrow f\in \Ann_S(s)B$.

(2): Let $s\in Z(S)$ be nonzero. So, there is a nonzero $s^{\prime} \in Z(S)$ such that $s\cdot s^{\prime} = 0$. This implies that $\lambda(s) \cdot \lambda(s^{\prime}) = 0$. Since $\lambda$ is injective, $\lambda(s)$ and $\lambda(s^{\prime})$ are both nonzero in $B$. So, $\lambda(s) \in Z(B)$ and therefore, by our assumption, there is an $i$ such that $\lambda(s) \in \textbf{q}_i$, i.e., $s \in \textbf{q}_i \cap S$. Now let $s \in \textbf{q}_i \cap S$. So, $\lambda(s) \in \textbf{q}_i$ and this implies that $\lambda(s)$ is a zero-divisor on $Z(B)$. This means that there is a nonzero $g\in B$ such that $s \cdot g =0$. Since $c$ is homogeneous, $s\cdot c(g) = (0)$ and if we choose a nonzero element $t$ of $c(g)$, we have $s\cdot t = 0$, which means that $s\in Z(S)$ and the proof is complete.
\end{proof}

\end{lemma}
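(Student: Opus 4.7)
The plan is to prove the two statements separately, with the same two ingredients---homogeneity of $c$ and the Ohm-Rush property---supplying everything needed for both parts.

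For part (1), I would establish the equality $\Ann_S(s)B = \Ann_B(\lambda(s))$ through a chain of four biconditionals, passing through the intermediate conditions ``$s \cdot f = 0$'', ``$s \cdot c(f) = (0)$'', and ``$c(f) \subseteq \Ann_S(s)$''. The first equivalence is by the definition of $\Ann_B(\lambda(s))$; the third is just the definition of annihilator ideal; and the fourth is the Ohm-Rush dictionary recorded in the preceding proposition, namely $c(f) \subseteq I$ iff $f \in IB$. The one genuinely nontrivial link is the second: the forward direction uses homogeneity, since $s\cdot f = 0$ yields $s \cdot c(f) = c(s\cdot f) = c(0) = (0)$, while the reverse direction uses the Ohm-Rush inclusion $f \in c(f) B$ to conclude $s \cdot f \in s \cdot c(f) B = (0)$.

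For part (2), I would split into two containments. For $Z(S) \subseteq \bigcup (\textbf{q}_i \cap S)$, take a nonzero $s \in Z(S)$ with $s s' = 0$ for some nonzero $s' \in S$; injectivity of $\lambda$ keeps both $\lambda(s)$ and $\lambda(s')$ nonzero in $B$, so $\lambda(s) \in Z(B) = \bigcup \textbf{q}_i$ forces $s \in \textbf{q}_i \cap S$ for some $i$. For the reverse containment, any $s \in \textbf{q}_i \cap S$ gives $\lambda(s) \in Z(B)$, which produces a nonzero $g \in B$ with $s \cdot g = 0$. Applying homogeneity yields $s \cdot c(g) = (0)$, and the Ohm-Rush property (in particular $c(g) \neq (0)$ since $g \neq 0$) supplies a nonzero $t \in c(g)$ with $s t = 0$, so $s \in Z(S)$.

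The subtlety worth flagging is in the reverse containment of (2): one cannot directly descend from $s \cdot g = 0$ in $B$ to a zero-divisor witness inside $S$, because $g$ need not lie in the image of $\lambda$. It is precisely the combination of homogeneity with the Ohm-Rush dictionary that lets one transfer the witness from $B$ down to a bona fide pair of elements in $S$; neither hypothesis alone suffices.
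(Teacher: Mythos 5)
Your proof is correct and follows essentially the same route as the paper: the identical chain of four biconditionals for part (1) (with homogeneity and the Ohm-Rush inclusion $f \in c(f)B$ justifying the middle link), and the same two containments for part (2), descending from a zero-divisor witness $g \in B$ to a nonzero $t \in c(g)$ via homogeneity. Your explicit justification of each equivalence in (1) is in fact more detailed than the paper's one-line chain.
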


Let us recall that a semiring $S$ has very few zero-divisors if the set of zero-divisors $Z(S)$ of $S$ is a finite union of primes in $\Ass(S)$ \cite[Definition 48]{Nasehpour2016}. For instance, by Theorem 49 in \cite{Nasehpour2016}, any Noetherian semiring has very few zero-divisors. Rings and modules having very few zero-divisors were introduced and studied in \cite{Nasehpour2010}, \cite{NasehpourPayrovi2010} and \cite{Nasehpour2011}.

\begin{theorem}

\label{veryfewzerodivisorsThm1}

Let $S$ be a weak Gaussian semiring and $B$ a McCoy and weak content $S$-semialgebra with homogeneous $c$ and the corresponding homomorphism $\lambda$ of the $S$-semialgebra $B$ be injective. Then $S$ has very few zero-divisors if and only if so does $B$.

\begin{proof}

($ \Rightarrow $): Let $S$ have very few zero-divisors. So by definition, $Z(S)$ is a finite union of primes $\textbf{p}_i$ in $\Ass(S)$. Our claim is that $Z(B)$ is union of the ideals $\textbf{p}_i B$.

Since $\textbf{p}_i = \Ann(s_i)$ is subtractive, by Lemma \ref{sub1}, $Z(B) \subseteq \bigcup^n_{i=1} (\textbf{p}_i B)$. Now let $f\in \textbf{p}_i B$. So $c(f) \subseteq \textbf{p}_i$. But $c$ is homogeneous, so $s_i \cdot f = 0$ and this means that $f \in Z(B)$. Also, note that since $B$ is a weak content $S$-semialgebra, by Lemma \ref{ann}, $\textbf{p}_i B \in \Ass(B)$, in which $B$ has very few zero-divisors.

($ \Leftarrow $): Let $Z(B)= \bigcup _{i=1}^n \textbf{q}_i$, where $\textbf{q}_i \in \Ass(B)$ for all $1\leq i \leq n$. Therefore, by Lemma \ref{ann}, $Z(S) = \bigcup _{i=1}^n (\textbf{q}_i \cap S)$. Without loss of generality, we can assume that $\textbf{q}_i \cap S \nsubseteq \textbf{q}_j \cap S$ for all $i \neq j$. Now we prove that $\textbf{q}_i \cap S\in \Ass(S)$ for all $1 \leq i \leq n$. Consider $f\in B$ such that $\textbf{q}_i = \Ann (f)$ and $c(f)=(s_1,s_2, \ldots,s_m)$. It is easy to see that $\textbf{q}_i \cap S = \Ann (c(f)) \subseteq \Ann(s_1) \subseteq Z(S)$ and since every prime ideal of $S$ is subtractive, by Theorem \ref{PATsemirings}, $\textbf{q}_i \cap S =\Ann(s_1)$.
\end{proof}

\end{theorem}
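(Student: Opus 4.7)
My plan is to handle the two directions separately, using the content function $c$, its homogeneity, the injectivity of $\lambda$, and Prime Avoidance (Theorem \ref{PATsemirings}). The key enabling fact, available because $S$ is weak Gaussian, is that every prime of $S$ is subtractive, so Theorem \ref{PATsemirings} will apply freely throughout.

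For $(\Rightarrow)$, suppose $Z(S)=\textbf{p}_1\cup\cdots\cup\textbf{p}_n$ with $\textbf{p}_i=\Ann_S(s_i)\in\Ass(S)$. Since each $\textbf{p}_i$ is subtractive, Lemma \ref{sub1} immediately yields $Z(B)\subseteq\bigcup_{i=1}^n\textbf{p}_iB$. For the reverse inclusion, I would take $f\in\textbf{p}_iB$; then $c(f)\subseteq\textbf{p}_i=\Ann_S(s_i)$, so $s_i\cdot c(f)=(0)$, and homogeneity of $c$ gives $c(s_i\cdot f)=s_i\cdot c(f)=(0)$, forcing $s_i\cdot f=0$ by the Ohm-Rush property $c(g)=(0)\Leftrightarrow g=0$. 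Since $\lambda$ is injective and $s_i\neq 0$, $\lambda(s_i)\neq 0$ in $B$, whence $f\in Z(B)$. Finally, Lemma \ref{ann}(1) identifies $\textbf{p}_iB=\Ann_S(s_i)B=\Ann_B(\lambda(s_i))$, exhibiting $\textbf{p}_iB$ as an element of $\Ass(B)$.

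For $(\Leftarrow)$, suppose $Z(B)=\textbf{q}_1\cup\cdots\cup\textbf{q}_n$ with $\textbf{q}_i\in\Ass(B)$. Lemma \ref{ann}(2) gives $Z(S)=\bigcup_{i=1}^n(\textbf{q}_i\cap S)$, and after pruning I may assume $\textbf{q}_i\cap S\nsubseteq\textbf{q}_j\cap S$ for $i\neq j$. To show each $\textbf{q}_i\cap S$ is associated, write $\textbf{q}_i=\Ann_B(f)$ with $c(f)=(s_1,\ldots,s_m)$. Combining homogeneity of $c$ with $c(g)=(0)\Leftrightarrow g=0$ yields the key chain
\[
\textbf{q}_i\cap S=\Ann_S(c(f))=\bigcap_{j=1}^m\Ann_S(s_j)\subseteq\Ann_S(s_1)\subseteq Z(S)=\bigcup_{k=1}^n(\textbf{q}_k\cap S).
\]
Because each $\textbf{q}_k\cap S$ is a subtractive prime, Theorem \ref{PATsemirings} places $\Ann_S(s_1)$ inside some $\textbf{q}_k\cap S$; the resulting chain $\textbf{q}_i\cap S\subseteq\Ann_S(s_1)\subseteq\textbf{q}_k\cap S$ together with irredundancy forces $k=i$, so $\textbf{q}_i\cap S=\Ann_S(s_1)\in\Ass(S)$.

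The two points I expect to require the most care are (a) in $(\Rightarrow)$, the translation of the scalar action $s_i\cdot f$ on $B$ through $\lambda$ so that $s_i\cdot f=0$ genuinely certifies $f$ as a zero-divisor in $B$, and (b) in $(\Leftarrow)$, the identification $\textbf{q}_i\cap S=\Ann_S(c(f))$, which is the bridge that transports associated primes of $B$ to associated primes of $S$. Once these are in place, the rest is routine book-keeping with Lemmas \ref{sub1} and \ref{ann}, Prime Avoidance, and the subtractivity of primes in a weak Gaussian semiring.
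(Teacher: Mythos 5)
Your proposal is correct and follows essentially the same route as the paper: Lemma \ref{sub1} for the inclusion $Z(B)\subseteq\bigcup\textbf{p}_iB$, homogeneity of $c$ plus injectivity of $\lambda$ for the reverse inclusion and for identifying $\textbf{p}_iB=\Ann_B(\lambda(s_i))$, and Lemma \ref{ann} together with Prime Avoidance for the converse. You merely spell out a few steps the paper leaves implicit (the deduction $s_i\cdot f=0\Rightarrow f\in Z(B)$ and the irredundancy argument forcing $k=i$), which is a welcome but not substantively different elaboration.
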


\begin{corollary}

Let $S$ be a weak Gaussian semiring. Then the following statements hold:

\begin{enumerate}
	
	\item If $X$ is an indeterminate over $S$, then $S$ has very few zero-divisors if and only if $S[X]$ does so.
	
	\item If $X_1, X_2, \ldots, X_n$ are distinct indeterminates over $S$, then $S$ has very few zero-divisors if and only if $S[X_1, X_2, \ldots, X_n]$ does so.

\item If $G$ is a cancellative torsion-free commutative monoid, then $S$ has very few zero-divisors if and only if $S[G]$ does so.

\end{enumerate}

\end{corollary}

\begin{remark}
	
	\label{notcontentEx}
	
	\begin{enumerate}
		
		\item Let $S$ be a semiring. Then it is clear that any content $S$-semialgebra is a weak content and a McCoy $S$-semialgebra. Now we show that there are weak content and McCoy $S$-semialgebras that are not content $S$-semialgebras:
	
	\item Let $S$ be a weak Gaussian and non-subtractive semiring, that is, all its prime ideals are subtractive, while $S$ possesses an ideal that is not subtractive. Note that there are such semirings. For example, refer to Proposition 21 in \cite{Nasehpour2016}. Then the $S$-semialgebra $S[X]$ is a weak content semialgebra, which obviously satisfies McCoy property (See Corollary \ref{mccoysemigroupsemiring} of the current paper), but still it is not a content $S$-semialgebra. So we have already obtained a weak content and a McCoy $S$-semialgebra that is not a content $S$-semialgebra.
	
	\end{enumerate}
\end{remark}

\begin{theorem}

 Let $B$ be a McCoy $S$-semialgebra, which the corresponding homomorphism $\lambda$ is injective. Let the content function $c: B \longrightarrow \Fid(S)$ be homogeneous and onto, where by $\Fid(S)$, we mean the set of finitely generated ideals of $S$. Then the following statements are equivalent:

\begin{enumerate}
 \item $S$ has Property (A),
 \item For all $f \in B$, $f$ is a regular element of $B$ if and only if $c(f)$ is a regular ideal of $S$.
\end{enumerate}

\begin{proof}
 $(1) \Rightarrow (2)$: Let $S$ have Property (A). If $f \in B$ is regular, then for all nonzero $s \in S$, $s\cdot f \not= 0$ and so for all nonzero $s \in S$, $s \cdot c(f) \not= (0)$, i.e. $\Ann(c(f)) = (0)$ and according to the definition of Property (A), $c(f) \not\subseteq Z(S)$. This means that $c(f)$ is a regular ideal of $S$. Now let $c(f)$ be a regular ideal of $S$. So, $c(f) \not\subseteq Z(S)$ and therefore, $\Ann(c(f)) = (0)$. This means that for all nonzero $s \in S$, $s\cdot c(f) \not= (0)$ and so, for all nonzero $s \in S$, $s \cdot f \not= 0$. Since $B$ is a McCoy $S$-semialgebra, $f$ is not a zero-divisor of $B$.

$(2) \Rightarrow (1)$: Let $I$ be a finitely generated ideal of $S$ such that $I \subseteq Z(S)$. Since the content function $c : B \rightarrow \Fid(S)$ is onto, there exists an $f \in B$ such that $c(f) = I$. But $c(f)$ is not a regular ideal of $S$, therefore, according to our assumption, $f$ is not a regular element of $B$. Since $B$ is a McCoy $S$-semialgebra, there exists a nonzero $s \in S$ such that $s \cdot c(f) = 0$ and this means that $s\cdot I = (0)$, i.e. $I$ has a nonzero annihilator and the proof is complete.
\end{proof}

\begin{remark}
In the above theorem the surjectivity condition for the content function $c$ is necessary, because obviously $S$ is a content $S$-semialgebra and the condition (2) is satisfied, while one can choose the semiring $S$ such that it does not have Property (A) (Cf. \cite[Exercise 7, p. 63]{Kaplansky1970}).
\end{remark}

\end{theorem}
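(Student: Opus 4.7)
The theorem is an equivalence, and my plan is to prove each direction separately by directly unwinding the definitions of McCoy, Ohm-Rush, homogeneity, and Property (A). The preparatory observation that makes everything fall into place is the pair of identities
\[
 c(sf)=s\cdot c(f), \qquad c(g)=(0)\iff g=0,
\]
valid for any $s\in S$ and $f,g\in B$, which together yield $sf=0 \iff s\cdot c(f)=(0)$. Combining this with the McCoy property (to produce an annihilating element of $S$ out of a zero-divisor relation in $B$) and the injectivity of $\lambda$ (so that $s\neq 0$ in $S$ forces $\lambda(s)\neq 0$ in $B$), I obtain the key auxiliary equivalence that $f$ is regular in $B$ if and only if $\Ann_S(c(f))=(0)$.

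For $(1)\Rightarrow(2)$, assume $S$ has Property (A) and let $f\in B$. If $f$ is $B$-regular, the key equivalence gives $\Ann_S(c(f))=(0)$; were $c(f)\subseteq Z(S)$, Property (A) applied to the finitely generated ideal $c(f)$ would furnish a nonzero annihilator, a contradiction. Hence $c(f)\not\subseteq Z(S)$, so $c(f)$ is a regular ideal. Conversely, suppose $c(f)$ contains a regular element $t$ but $f\in Z(B)$. The McCoy property yields a nonzero $s\in S$ with $s\cdot c(f)=(0)$, so in particular $st=0$; regularity of $t$ then forces $s=0$, contradicting $s\neq 0$.

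For $(2)\Rightarrow(1)$, let $I$ be a finitely generated ideal with $I\subseteq Z(S)$. Surjectivity of $c\colon B\to\Fid(S)$ produces some $f\in B$ with $c(f)=I$. Since $c(f)$ is not regular, statement $(2)$ says that $f$ is not regular in $B$, so $f\in Z(B)$. The McCoy property then yields a nonzero $s\in S$ with $s\cdot c(f)=(0)$, i.e., $sI=(0)$, giving Property (A).

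The argument is largely a definition-chase, and I do not anticipate a serious technical obstacle; the main conceptual point is tracking which hypothesis does which job. Homogeneity plus the Ohm-Rush identity give the equivalence $sf=0 \iff s\cdot c(f)=(0)$, injectivity of $\lambda$ keeps regularity of $f$ testable against elements of $S$, the McCoy property converts zero-divisor relations in $B$ into annihilation relations on $c(f)$, and surjectivity of $c$ is used exactly once---in $(2)\Rightarrow(1)$ to realize an arbitrary finitely generated ideal of $S$ as some $c(f)$---which is precisely the point the subsequent remark emphasizes cannot be dropped.
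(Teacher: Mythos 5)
Your proposal is correct and follows essentially the same route as the paper: both arguments reduce regularity of $f$ in $B$ to the vanishing of $\Ann_S(c(f))$ via homogeneity, the Ohm-Rush identity, injectivity of $\lambda$, and the McCoy property, then invoke Property (A) in one direction and surjectivity of $c$ in the other. Your packaging of this as an explicit auxiliary equivalence, and your direct contradiction via a regular element $t\in c(f)$ in the second half of $(1)\Rightarrow(2)$, are only cosmetic reorganizations of the paper's definition-chase.
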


\begin{theorem}

\label{fewzerodivisorsThm1}

Let $S$ be a weak Gaussian semiring and $B$ a McCoy and weak content $S$-semialgebra such that the corresponding homomorphism $\lambda$ is injective and $c : B \rightarrow \Fid(S)$ is a homogeneous and an onto function. Then $B$ has few zero-divisors if and only if $S$ has few zero-divisors and property (A).

\end{theorem}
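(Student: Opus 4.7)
The plan is to mimic the structure of Theorem~\ref{veryfewzerodivisorsThm1} and of the monoid--semiring result Theorem~\ref{vfzdThmsemimodule}, using the McCoy property, the weak content hypothesis, and homogeneity of $c$ to travel back and forth between ideals of $S$ and ideals of $B$. The crucial extra ingredient (compared with Theorem~\ref{veryfewzerodivisorsThm1}) is that the primes in the decomposition of $Z(S)$ need not be annihilators, so one must both (i) produce Property~(A) in $S$ when $B$ has few zero-divisors, and (ii) show that every extended ideal $\textbf{p}_i B$ is itself prime and proper in $B$.

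For the direction $(\Leftarrow)$, suppose $Z(S)=\textbf{p}_1\cup\cdots\cup\textbf{p}_n$ with each $\textbf{p}_i$ prime and $S$ satisfying Property~(A). Since $S$ is weak Gaussian, each $\textbf{p}_i$ is subtractive (by Theorem~19 of \cite{Nasehpour2016}, which is recalled in Theorem~\ref{WDMcriteria6}). I will first claim that
\[
Z(B)=\textbf{p}_1 B\cup\cdots\cup\textbf{p}_n B.
\]
The inclusion $\subseteq$ is exactly Lemma~\ref{sub1}. For $\supseteq$, given $f\in\textbf{p}_i B$ we have $c(f)\subseteq\textbf{p}_i\subseteq Z(S)$; Property~(A) then yields a nonzero $s\in S$ with $s\cdot c(f)=0$, and homogeneity gives $c(sf)=s\cdot c(f)=0$, so $sf=0$ in $B$. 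Since $\lambda$ is injective, $\lambda(s)\neq 0$, whence $f\in Z(B)$. Next, weak content of $B$ together with $\textbf{p}_i$ being prime shows that $\textbf{p}_i B$ is prime: if $fg\in\textbf{p}_i B$ then $c(f)c(g)\subseteq\sqrt{c(fg)}\subseteq\sqrt{\textbf{p}_i}=\textbf{p}_i$, and primeness of $\textbf{p}_i$ forces $c(f)\subseteq\textbf{p}_i$ or $c(g)\subseteq\textbf{p}_i$. Properness of $\textbf{p}_i B$ is automatic: if $\textbf{p}_i B=B$ then $1_B\in\bigcup \textbf{p}_jB=Z(B)$, which is impossible. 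Hence $B$ has few zero-divisors.

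For the direction $(\Rightarrow)$, suppose $Z(B)=\textbf{q}_1\cup\cdots\cup\textbf{q}_n$ with each $\textbf{q}_i$ a prime ideal of $B$. By part~(2) of Lemma~\ref{ann}, $Z(S)=\bigcup_{i=1}^n(\textbf{q}_i\cap S)$, and contractions of primes are prime, giving at once that $S$ has few zero-divisors. It remains to deduce Property~(A) for $S$. Given a finitely generated $I\subseteq Z(S)$, surjectivity of $c\colon B\to\Fid(S)$ produces $f\in B$ with $c(f)=I$. Since each $\textbf{q}_i\cap S$ is a subtractive prime of $S$ (weak Gaussian), the semiring Prime Avoidance Theorem~\ref{PATsemirings} applied to $c(f)\subseteq\bigcup(\textbf{q}_i\cap S)$ yields $c(f)\subseteq\textbf{q}_j\cap S$ for some $j$. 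Then $f\in c(f)B\subseteq\textbf{q}_j\subseteq Z(B)$, so by the McCoy property there is a nonzero $s\in S$ with $s\cdot c(f)=0$, that is, $sI=0$, proving Property~(A).

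I expect the main obstacle to be the $(\Leftarrow)$ direction, specifically verifying that each extended ideal $\textbf{p}_iB$ is simultaneously prime, proper, and actually equal to the corresponding piece of $Z(B)$. The primeness pivots on the weak content inequality $c(f)c(g)\subseteq\sqrt{c(fg)}$, and the equality $Z(B)=\bigcup \textbf{p}_iB$ is precisely where Property~(A), homogeneity of $c$, and injectivity of $\lambda$ must cooperate; every other step is a bookkeeping exercise built on the lemmas already in place.
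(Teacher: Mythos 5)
Your proof is correct and follows essentially the same route as the paper: Lemma~\ref{sub1} together with Property~(A), homogeneity of $c$, and injectivity of $\lambda$ for the $(\Leftarrow)$ direction, and Lemma~\ref{ann}, surjectivity of $c$, prime avoidance, and the McCoy property to recover Property~(A) in the $(\Rightarrow)$ direction. The only difference is that you explicitly check that each $\textbf{p}_i B$ is a proper prime ideal of $B$ via the weak content inequality $c(f)c(g)\subseteq\sqrt{c(fg)}$, a point the paper's proof leaves implicit even though it is needed for $B$ to have few zero-divisors in the stated sense.
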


\begin{proof}
($\Leftarrow$): Let $Z(S) = \bigcup \textbf{p}_i$. Take $f\in \textbf{p}_i B$, for some $i$. So, $c(f) \subseteq \textbf{p}_i \subseteq Z(S)$. Since $S$ has property (A), there is some nonzero $s\in S$ such that $s \cdot c(f) = (0)$. Since $c$ is homogeneous and $\lambda$ is injective, $f\in Z(B)$. Now by lemma \ref{sub1}, the proof of this part is complete.

($\Rightarrow$): By considering Lemma \ref{ann}, we only need to prove that $S$ has property (A). Let $I \subseteq Z(S)$ be a finitely generated ideal of $S$. Since $c : B \rightarrow \Fid(S)$ is onto, there is an $f\in B$ such that $c(f) = I$ and by Prime Avoidance Theorem for Semirings, $c(f) \subseteq \textbf{q}_i \cap S$, where $Z(B) = \bigcup \textbf{q}_i$. Now we have $f\in (\textbf{q}_i \cap S)B \subseteq \textbf{q}_i \subseteq Z(B)$. But $B$ is a McCoy $S$-semialgebra. So, there is some nonzero $s\in S$ such that $s\cdot I = (0)$. Q.E.D.
\end{proof}

\begin{corollary}

Let $S$ be a weak Gaussian semiring. Then $S[X_1, X_2, \ldots, X_n]$ has few zero-divisors if and only if $S$ has few zero-divisors and property (A).

\end{corollary}

A prime ideal $\textbf{p}$ of a commutative ring $R$ is said to be a strong Krull prime of $R$, if for any finitely generated ideal $I$ of $R$, there exists a $z\in R$ such that $I\subseteq \Ann(z) \subseteq \textbf{p}$, whenever $I\subseteq \textbf{p}$ \cite{McDowell1975}. For a nice introduction to strong Krull primes and their properties, one can refer to a recent paper by Epstein and Shapiro \cite{EpsteinShapiro2014}.

\begin{definition}

\label{strongKrullprimesemiring}

We define a prime ideal $\textbf{p}$ of a semiring $S$ to be a strong Krull prime of $S$, if for any finitely generated ideal $I$ of $S$, there exists a $z\in S$ such that $I\subseteq \Ann(z) \subseteq \textbf{p}$, whenever $I\subseteq \textbf{p}$.

\end{definition}

\begin{lemma}

\label{sKprime1}

Let $B$ be a weak content $S$-semialgebra such that the corresponding homomorphism $\lambda$ is injective and $c$ is homogeneous. If $\textbf{p}$ is a strong Krull prime of $S$, then either $\textbf{p}B = B$ or $\textbf{p}B$ is a strong Krull prime of $B$.

\begin{proof}
Let $\textbf{p}B \neq B$ and $J$ be a finitely generated ideal of $B$ such that $J \subseteq \textbf{p}B$, where $\textbf{p}$ is a strong Krull prime of $S$. Assume that $J=(f_1, \ldots,f_k)$ for $f_1, \ldots, f_k \in B$. This means that $f_i \in \textbf{p}B$ for all $1 \leq i \leq n$. This implies that $c(f_i) \subseteq \textbf{p}$ for all $1 \leq i \leq n$. Since $\textbf{p}$ is a strong Krull prime of $S$ and $c(f_1) + \cdots + c(f_k) \subseteq \textbf{p}$ is a finitely generated ideal of $S$, there exists an $s \in S$ such that $c(f_1) + \cdots + c(f_k) \subseteq \Ann(s) \subseteq \textbf{p}$. Obviously, $J$ is annihilated by $s$, i.e. $J\subseteq \Ann(\lambda(s))$. The final phase of the proof is to show that $\Ann(\lambda(s)) \subseteq \textbf{p}$. Let $f\in \Ann(\lambda(s))$. So $s \cdot f=0$ and therefore, $s \cdot c(f)=0$. This means that $c(f) \subseteq \Ann(s) \subseteq \textbf{p}$ and finally $f\in \textbf{p}B$.
\end{proof}

\end{lemma}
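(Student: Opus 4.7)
The plan is to verify two properties of $\textbf{p}B$ when $\textbf{p}B \neq B$: first that it is prime in $B$, and second that it has the strong Krull property. For primality, I would take $f, g \in B$ with $fg \in \textbf{p}B$, translate this via the Ohm-Rush characterization to $c(fg) \subseteq \textbf{p}$, and combine it with the weak content inclusion $c(f)c(g) \subseteq \sqrt{c(fg)}$. Since any prime ideal of a semiring coincides with its own radical (a quick induction on $a^n \in \textbf{p}$ using primality), this yields $c(f)c(g) \subseteq \textbf{p}$, and primality of $\textbf{p}$ then forces $c(f) \subseteq \textbf{p}$ or $c(g) \subseteq \textbf{p}$, i.e., $f \in \textbf{p}B$ or $g \in \textbf{p}B$.

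For the strong Krull condition, I would take a finitely generated ideal $J = (f_1, \ldots, f_k)$ of $B$ with $J \subseteq \textbf{p}B$. Each $c(f_i) \subseteq \textbf{p}$, so $I := c(f_1) + \cdots + c(f_k)$ is a finitely generated ideal of $S$ contained in $\textbf{p}$. Because $\textbf{p}$ is strong Krull in $S$, I obtain some $s \in S$ with $I \subseteq \Ann_S(s) \subseteq \textbf{p}$, and my candidate witness in $B$ will be $\lambda(s)$. The two inclusions $J \subseteq \Ann_B(\lambda(s))$ and $\Ann_B(\lambda(s)) \subseteq \textbf{p}B$ both hinge on homogeneity of $c$. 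For the first, $s \cdot c(f_i) \subseteq s \cdot I = 0$ gives $c(s f_i) = 0$, hence $s f_i = 0$ (since $c(h) = 0$ iff $h = 0$ in an Ohm-Rush semialgebra), so $\lambda(s)$ annihilates each generator of $J$. For the second, if $f \in \Ann_B(\lambda(s))$ then $s \cdot f = 0$, so homogeneity gives $s \cdot c(f) = 0$, i.e., $c(f) \subseteq \Ann_S(s) \subseteq \textbf{p}$, whence $f \in \textbf{p}B$ by the Ohm-Rush characterization.

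The main obstacle I anticipate is the primality of $\textbf{p}B$: neither the weak content nor the strong Krull hypothesis immediately yields it, and one must really combine the weak content inclusion with the elementary fact that prime ideals of semirings are radical. The remainder is a clean translation between elements of $B$ and their content ideals, enabled by homogeneity of $c$ and the Ohm-Rush characterization $c(f) \subseteq I$ iff $f \in IB$. The injectivity of $\lambda$ is not strictly needed in the logic of the strong Krull step, since the definition only demands the existence of a witness $z$, but it does ensure $\lambda(s)$ is a nonzero witness whenever $s$ is, which is reassuring and will matter for the downstream application in the next theorem.
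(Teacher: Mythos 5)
Your proof is correct, and in fact more complete than the paper's own. The strong Krull verification --- summing the contents of the generators of $J$, invoking the strong Krull property of $\textbf{p}$ in $S$ to obtain $s$ with $c(f_1)+\cdots+c(f_k) \subseteq \Ann_S(s) \subseteq \textbf{p}$, and then using homogeneity of $c$ in both directions to establish $J \subseteq \Ann_B(\lambda(s)) \subseteq \textbf{p}B$ --- is essentially word for word the argument in the paper. What you add, and the paper omits, is the verification that $\textbf{p}B$ is actually a \emph{prime} ideal of $B$, which the definition of a strong Krull prime presupposes. Your argument for this (translate $fg \in \textbf{p}B$ into $c(fg) \subseteq \textbf{p}$ via the Ohm--Rush characterization, apply the weak content inclusion $c(f)c(g) \subseteq \sqrt{c(fg)} \subseteq \sqrt{\textbf{p}} = \textbf{p}$, and finish by primality of $\textbf{p}$ in $S$) is sound, and it is the only place where the weak content hypothesis is used at all --- the paper's written proof never invokes it. Your closing observation about $\lambda$ is also accurate: since $\Ann_B(\lambda(s)) \subseteq \textbf{p}B \subsetneq B$, the witness $\lambda(s)$ is automatically nonzero, so injectivity is not logically required at this step, though it is harmless and is used elsewhere in the section.
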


\begin{theorem}

\label{sKprime2}

Let $B$ be a McCoy and weak content $S$-semialgebra such that the corresponding homomorphism $\lambda$ is injective and $c$ is homogeneous. If $Z(S)$ is a finite union of strong Krull primes of $S$, then $Z(B)$ is a finite union of strong Krull primes of $B$.

\begin{proof}
Let $f\in \textbf{p}_i B$ for some $i$. So, $c(f) \subseteq \textbf{p}_i$. But $\textbf{p}_i$ is a strong Krull prime of $S$ and $c(f)$ is a finitely generated ideal of $S$. So, there exists a $z\in S$ such that $c(f) \subseteq \Ann(z)$. This implies that $f\in Z(B)$. On the other hand, by Lemma \ref{sub1}, $Z(B)= \bigcup^n_{i=1} (\textbf{p}_i B)$ and at least one of those $\textbf{p}_i B$s is a proper ideal of $B$. Therefore, by Lemma \ref{sKprime1}, if $\textbf{p}_i B \neq B$, then $\textbf{p}_i B$ must be a strong Krull prime of $B$ and this completes the proof.
\end{proof}

\end{theorem}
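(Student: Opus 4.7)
The plan is to show that $Z(B) = \bigcup_{i=1}^n (\textbf{p}_i B)$ (ignoring any summands that exhaust $B$), and then invoke Lemma \ref{sKprime1} to promote each proper $\textbf{p}_i B$ to a strong Krull prime of $B$. Write $Z(S) = \bigcup_{i=1}^n \textbf{p}_i$ with each $\textbf{p}_i$ a strong Krull prime (and hence prime) of $S$. Since $B$ is a weak content $S$-semialgebra, the semiring $S$ is weak Gaussian, so by Theorem \ref{WDMcriteria6} every prime ideal of $S$ is subtractive; in particular each $\textbf{p}_i$ is subtractive. This permits the application of Lemma \ref{sub1} (whose McCoy hypothesis is given) to conclude the inclusion $Z(B) \subseteq \bigcup_{i=1}^n (\textbf{p}_i B)$.

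For the reverse inclusion, I would take $f \in \textbf{p}_i B$, so that $c(f) \subseteq \textbf{p}_i$ by the Ohm-Rush characterization. As $c(f)$ is a finitely generated ideal of $S$, the strong Krull property of $\textbf{p}_i$ produces $z \in S$ with $c(f) \subseteq \Ann_S(z) \subseteq \textbf{p}_i$. Note $z \neq 0$: otherwise $\Ann_S(z) = S$ would not sit inside the proper ideal $\textbf{p}_i$. Injectivity of $\lambda$ then gives $\lambda(z) \neq 0$ in $B$, while homogeneity of $c$ yields $c(z \cdot f) = z \cdot c(f) = (0)$, so $z \cdot f = 0$ and hence $f \in Z(B)$.

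Combining the two inclusions, $Z(B) = \bigcup_{i=1}^n (\textbf{p}_i B)$. Any index $i$ for which $\textbf{p}_i B = B$ can be discarded: such a term cannot actually appear in $Z(B)$ because $1 \notin Z(B)$. For each remaining index, $\textbf{p}_i B$ is a proper ideal, and Lemma \ref{sKprime1} (whose hypotheses of weak content, injectivity of $\lambda$, and homogeneity of $c$ are exactly those of the present theorem) tells us that $\textbf{p}_i B$ is a strong Krull prime of $B$. Thus $Z(B)$ is a finite union of strong Krull primes of $B$, finishing the argument.

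The main obstacle I anticipate is purely bookkeeping: making sure all hypotheses needed by the two lemmas line up. Specifically, one must verify that strong Krull primes of $S$ are subtractive (which comes for free from weak content $\Rightarrow$ weak Gaussian $\Rightarrow$ every prime subtractive, via Theorem \ref{WDMcriteria6}), and one must handle the possibility $\textbf{p}_i B = B$ before citing Lemma \ref{sKprime1}. Once these are in place, no genuinely new computation is required beyond the one-line use of the strong Krull property and homogeneity of $c$.
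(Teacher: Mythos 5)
Your proposal follows the same route as the paper's own proof: establish $Z(B) = \bigcup_{i=1}^n (\textbf{p}_i B)$ — the inclusion $\supseteq$ via the strong Krull property together with homogeneity of $c$ and injectivity of $\lambda$, the inclusion $\subseteq$ via Lemma \ref{sub1} — and then promote each proper $\textbf{p}_i B$ to a strong Krull prime by Lemma \ref{sKprime1}. Your handling of the reverse inclusion is in fact more complete than the paper's, which does not spell out why $c(f) \subseteq \Ann(z)$ forces $f \in Z(B)$; your observations that $z \neq 0$ and that $c(z\cdot f) = z\cdot c(f) = (0)$ implies $z \cdot f = 0$ are exactly the missing details.

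There is, however, one incorrect step in your bookkeeping: it is not true that $B$ being a weak content $S$-semialgebra forces $S$ to be weak Gaussian. The semiring $S$ itself is always a content (hence weak content, and McCoy) $S$-semialgebra with $\lambda$ the identity and $c$ homogeneous, for an arbitrary $S$; and note that Theorems \ref{veryfewzerodivisorsThm1} and \ref{fewzerodivisorsThm1} list ``$S$ weak Gaussian'' as a hypothesis \emph{in addition to} ``weak content semialgebra,'' which would be redundant if your implication held. So you cannot reach the subtractivity of the $\textbf{p}_i$ (which Lemma \ref{sub1} genuinely requires) through Theorem \ref{WDMcriteria6}. Fortunately the fact you need holds for a more direct reason: \emph{every strong Krull prime is subtractive}. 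Indeed, if $a+b, a \in \textbf{p}$, apply the strong Krull property to the finitely generated ideal $(a+b,a) \subseteq \textbf{p}$ to obtain $z$ with $(a+b,a) \subseteq \Ann(z) \subseteq \textbf{p}$; since any annihilator ideal is subtractive ($bz = az + bz = (a+b)z = 0$), we get $b \in \Ann(z) \subseteq \textbf{p}$. With this substitution your argument goes through; it is worth noting that the paper's own proof silently relies on the same subtractivity when it invokes Lemma \ref{sub1}.
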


\begin{corollary}
	
\label{sKprime3}
	
Let $S$ be a semiring and $B$ a content $S$-semialgebra such that $c$ is homogeneous. If $Z(S)$ is a finite union of strong Krull primes of $S$, then $Z(B)$ is a finite union of strong Krull primes of $B$.

\begin{proof}
Since any content semialgebra is a McCoy \cite[Proposition 31]{Nasehpour2016} and weak content semialgebra \cite[Proposition 37]{Nasehpour2016}, by Theorem \ref{sKprime2}, the statement holds and the proof is complete.
\end{proof}
\end{corollary}

\begin{corollary}
	
\label{sKprime4}

Let $S$ be a weak Gaussian semiring. If $Z(S)$ is a finite union of strong Krull primes of $S$, then $Z(S[X_1, X_2, \ldots, X_n])$ is a finite union of strong Krull primes of $S[X_1, X_2, \ldots, X_n]$.

\begin{proof}
Since $S$ is a weak Gaussian semiring, each prime ideal of $S$ is subtractive \cite[Theorem 19]{Nasehpour2016}. Therefore, each ideal $\textbf{p}S[X_1, X_2, \ldots, X_n]$ is prime if $\textbf{p}$ is prime (See Theorem \ref{WDMcriteria5}). So, $S[X_1, X_2, \ldots, X_n]$ is a weak content $S$-semialgebra \cite[Proposition 37]{Nasehpour2016}. On the other hand, by Corollary \ref{mccoysemigroupsemiring}, $S[X_1, X_2, \ldots, X_n]$ is a McCoy semialgebra. So, by Theorem \ref{sKprime2}, if $Z(S)$ is a finite union of strong Krull primes of $S$, then $Z(S[X_1, X_2, \ldots, X_n])$ is a finite union of strong Krull primes of $S[X_1, X_2, \ldots, X_n]$.
	\end{proof}
\end{corollary}

\begin{corollary}
	
	\label{strongKrullcontent}
	Let $R$ be a commutative ring with a nonzero identity and $B$ be a content $R$-algebra. If $Z(R)$ is a finite union of strong Krull primes of the ring $R$, then $Z(B)$ is a finite union of strong Krull primes of $B$.
\end{corollary}

\section*{Acknowledgements} This work is supported by Golpayegan University of Technology. Our special thanks go to the Department of Engineering Science in Golpayegan University of Technology for providing all the necessary facilities available to us for successfully conducting this research. We also like to thank Prof. Dara Moazzami, the President of Golpayegan University of Technology, for his help, motivation, and encouragement.

\bibliographystyle{plain}

\end{document}